\pgfplotsset{ compat=1.18,
    standard/.style={
    scale only axis,
    width=0.5\textwidth,
    enlarge x limits=0.05,
    enlarge y limits=0.05,
    max space between ticks=40,
    every axis/.append style={font=\small},
	every legend/.append style={font=\small},
	every node/.append style={font=\small},	
	}
}
\definecolor{steelblue}{HTML}{A1BDC7}
\definecolor{orange}{HTML}{D98C21}
\definecolor{silver}{HTML}{B0ABA8}
\definecolor{rust}{HTML}{B8420F}
\definecolor{seagreen}{HTML}{2E6B69}
\definecolor{joshua}{HTML}{FBDC7F}
\definecolor{darksky}{HTML}{154c79}
\colorlet{lightsilver}{silver!30!white}
\colorlet{darkorange}{orange!85!black}
\colorlet{darksilver}{silver!85!black}
\colorlet{darksteelblue}{steelblue!85!black}
\colorlet{darkrust}{rust!85!black}
\colorlet{darkseagreen}{seagreen!85!black}
\renewcommand{\phi}{\varphi}
\renewcommand{\epsilon}{\varepsilon}
\newcommand{\cnst}[1]{\mathrm{#1}}  
\newcommand{\econst}{\mathrm{e}}
\newcommand{\Id}{\mathbf{I}} % Identity matrix
\providecommand{\mathbbm}{\mathbb} % In case we don't load bbm
\newcommand{\R}{\mathbbm{R}}
\newcommand{\N}{\mathbbm{N}}
\newcommand{\Expect}{\operatorname{\mathbb{E}}}
\newcommand{\vct}[1]{\mathbold{#1}}
\newcommand{\mtx}[1]{\mathbold{#1}}
\newcommand{\tp}{{T}}
\newcommand{\trace}{\operatorname{tr}}
\newcommand{\vspan}{\operatorname{span}}
\DeclarePairedDelimiterX{\infdivx}[2]{(}{)}{%
  #1\;\delimsize\|\;#2%
}
\newcommand{\rS}{\operatorname{\mathcal{S}}}
\newcommand{\Bs}{\operatorname{\mathcal{B}}}
\newcommand{\Func}{F}
\newcommand{\Probs}{\mathcal{P}}
\DeclareRobustCommand{\stirling}{\genfrac\{\}{0pt}{}}
\newcommand{\allconstant}[2]{\mathbb{I}^{\geq}_{#1}\left(#2\right)}
\newcommand{\allrespects}[2]{\mathbb{I}^{=}_{#1}\left(#2\right)}
\newcommand{\vecpart}{\varsigma}
\newcommand\Ccancel[2][black]{
    \let\OldcancelColor\CancelColor
    \renewcommand\CancelColor{\color{#1}}
    \cancel{#2}
    \renewcommand\CancelColor{\OldcancelColor}
}
\newcommand{\bigO}{\mathcal{O}} 
\setlist[enumerate]{leftmargin=.5in}
\setlist[itemize]{leftmargin=.5in}
\newcommand{\univec}{\mathbf{e}}
\DeclareRobustCommand{\stirling}{\genfrac\{\}{0pt}{}}
\crefname{hypothesis}{Hypothesis}{Hypotheses}
\title{M{\"o}bius inversion and the iterated bootstrap} %\thanks{Submitted to the editors DATE.
\author{Florian Sch{\"a}fer\thanks{Courant Institute of Mathematical Sciences, New York University 
  (\email{florian.schaefer@nyu.edu}).}}
\begin{document}

\maketitle

% REQUIRED
\begin{abstract}
  Estimating nonlinear functionals of probability distributions from samples is a fundamental statistical problem.
  The ``plug-in'' estimator obtained by applying the target functional to the empirical distribution of samples is biased.
  Resampling methods such as the bootstrap derive artificial datasets from the original one by resampling.
  Comparing the outcome of the plug-in estimator in the original and resampled datasets allows estimating and thus correcting the bias.
  In the asymptotic setting, iterations of this procedure attain an arbitrarily high order of bias correction, but finite sample results are scarce.
  This work develops a new theoretical understanding of bootstrap bias correction by viewing it as an iterative linear solver for the combinatorial operation of M{\"o}bius inversion.
  It sharply characterizes the regime of linear convergence of the bootstrap bias reduction for moment polynomials. 
  It uses these results to show its superalgebraic convergence rate for band-limited functionals.
  Finally, it derives a modified bootstrap iteration enabling the unbiased estimation of unknown order-$m$ moment polynomials in $m$ bootstrap iterations.
\end{abstract}

% REQUIRED
\begin{keywords}
Bootstrap bias correction, M{\"o}bius inversion, moment polynomials, cumulants, nonasymptotic statistics
\end{keywords}

% REQUIRED
\begin{MSCcodes}
62F40, 62G09, 62R01, 65F10
\end{MSCcodes}

\section{Introduction and overview}
\subsection{Background: Bootstrap bias correction}
\subsubsection*{Estimation of nonlinear functionals}
Many statistical problems require the estimation of a nonlinear\footnote{Here and in what follows, we consider nonlinearity with respect to the vector space structure of measures.} functional $\Func$ of a probability distribution $P$ from samples $X \sim P$.
Examples include the estimation of variances of statistical estimators, the estimation of entropies of probability distributions, and the inversion of means of random matrices \cite{valiant2011estimating,lopes2019estimating,derezinski2021sparse,palmer2022calibration,ma2022correcting,epperly2024efficient}.

\subsubsection*{The plug-in estimator and its bias} 
Given $N$ independent samples from an $m$-variate distribution $P$, summarized in a data matrix $\mtx{X} \in \R^{N \times m}$, a natural estimator of $\Func(P)$ is the plug-in estimator $\Func(\mtx{X})$ obtained by applying $\Func$ to the empirical distribution of samples.
But for nonlinear $\Func$, it is biased in the sense that $\Expect_{\mtx{X} \sim_{N} P}\left[\Func(\mtx{X})\right] \neq \Func(P)$.
This can be overcome by estimating and subtracting the bias, a procedure known as bias correction \cite{cordeiro1991bias,an2020resampling,dhaene2022resampling}. 

\subsubsection*{Bootstrap bias correction\nopunct} 
creates new datasets $\mtx{X}^{2} \sim_{N} \mtx{X}$ of the same size obtained by resampling the original data \cite{efron1992bootstrap}. 
The bias of the plug-in estimator $\Func(\mtx{X}^{2})$ applied to $\Func(\mtx{X})$ serves as a proxy for that of the plug-in estimator $\Func(\mtx{X})$ for $\Func(P)$, resulting in the corrected plug-in estimator $[\Bs\Func](\mtx{X}) \coloneqq \Func(\mtx{X}) + (\Func(\mtx{X}) - \Expect_{\mtx{X}^2 \sim_N \mtx{X}}[\Func(\mtx{X}^2))]$.
Iterative application of this idea can further reduce the bias, resulting in $\Bs^2 \Func, \Bs^3 \Func, \ldots, \Bs^k \Func$, the so-called $k$-times iterated (nonparametric) bootstrap \cite{efron1983estimating,hall1988bootstrap,hall2013bootstrap}.

\subsubsection*{Asymptotic theory}
Most theoretical studies of the iterated bootstrap derive asymptotic rates at which the bias of the corrected estimator decreases under increasing $N$, for fixed $P$, $\Func$, and $k$. 
Assumptions on the smoothness of $\Func$ and the finiteness of moments of $P$ yield asymptotic rates of the form $\bigO(N^{-(k + 1)})$ for the $k$-times iterated bootstrap \cite{hall1988bootstrap,hall2013bootstrap}.
For convex $\Func$, \cite{ma2022correcting} prove that bootstrap bias correction improves the mean squared error.

\subsubsection*{Finite sample results\nopunct} are scarce and restricted to special settings.
A key observation of \cite{jiao2020bias} is that the iterated bootstrap amounts to a Neumann series approximation of the inverse of the \emph{resampling operator} $\rS: \Func(\cdot) \mapsto \Expect_{\mtx{X} \sim_N (\cdot)}[\Func(\mtx{X})]$ acting on functionals of probability distributions.
Using this perspective, \cite{jiao2020bias} derives finite sample results for smooth functionals of Bernoulli distributions.
\cite{koltchinskii2020asymptotically,koltchinskii2022estimation,koltchinskii2021estimation} extend this approach to obtain finite sample results for debiasing smooth functionals of empirical covariance matrices of normal distributions.

\subsection{This work: Improved finite sample bounds by exploiting the combinatorics of \texorpdfstring{$\rS$}{S}}
\subsubsection*{A basis for \texorpdfstring{$\rS$}{S}} The starting point of the present work is the observation that the resampling operator $\rS$ maps the space of moment polynomials of a given order onto itself. 
Its restriction to this space is thus represented by a matrix $\mtx{S}$. 
Applying the $k$-times iterated bootstrap to a moment polynomial $\Func$ thus amounts to using $k$ steps of Richardson iteration to solve a linear system in $\mtx{S}$.
The eigenvalues of $\mtx{S}$ encode the effectiveness of the resulting bias reduction.

\subsubsection*{M{\"o}bius inversion} Moment products of total order $m$ are in one-to-one correspondence with the set of partitions of $\{1, \ldots, m\}$. 
The partial order induced by sub-partition turns it into a lattice, a partially ordered set with unique suprema and infima. 
In analogy to antiderivatives on the real line, the antiderivative of a function $\vct{f}$ on a lattice at the point $\pi$ is the partial sum over function values in smaller elements $\sigma \leq \pi$.
The inverse of this operation is called M{\"o}bius inversion. 
In the case of the partition lattice, M{\"o}bius inversion maps moment products to cumulant products \cite{mccullagh2018tensor}.

\subsubsection*{M{\"o}bius inversion and the bootstrap}
The key observation of this work is that, up to a diagonal scaling, the action of $\mtx{S}$ on moment polynomials coincides with the integration of functions on the partition lattice. 
Conversely, the iterated bootstrap implements an iterative algorithm for M{\"o}bius inversion.
Thus, $\mtx{S}$ is a lower triangular matrix. 
Its diagonal entries are computable in closed form and equal its eigenvalues, enabling a detailed understanding of bootstrap bias reduction.

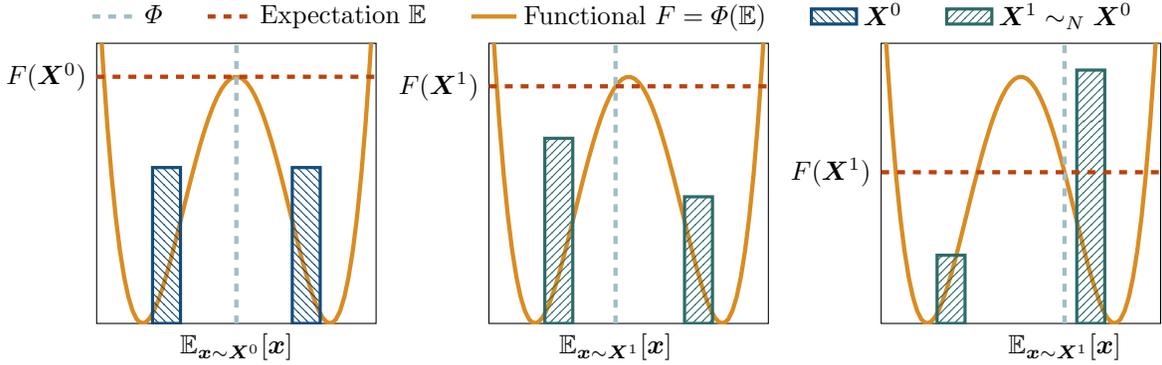
\begin{figure}
  \label{fig:plug-in_bias}
  \centering
  \begin{tikzpicture}
\begin{groupplot}[
	compat=1.3,
	group style={group size=3 by 1,
	horizontal sep=1.5cm,
    vertical sep=0.5cm,},
	]
	\nextgroupplot[
		standard,
 		xlabel={},
		ylabel={},
% 		xmode=log,
% 		ymode=log,
		height=0.225\textwidth,
		width=0.225\textwidth,
        ymin=-0.00,
        ymax=1.15,
        xmin=-0.50,
        xmax=1.50,
        yticklabels={{$\Func(\mtx{X}^0)$}},
        xticklabels={{$\Expect_{\vct{x} \sim \mtx{X}^0}[\vct{x}]$}},
		% xmajorticks=false,
		xtick={0.5},
		ytick={1.0125},
		xtick style={draw=none},
		ytick style={draw=none},
		% ymajorticks=false,
		enlarge x limits=0.,
		enlarge y limits=0.,
		legend style={
			at={(1.85,1.00)},inner sep=3pt,anchor=south,legend columns=5,legend cell align={left}, draw=none,fill=none, /tikz/every even column/.append style={column sep=0.5cm}},
		]	

		\addlegendimage{steelblue, ultra thick, dashed}
		\addlegendimage{rust, ultra thick, dashed}
		\addlegendimage{orange,ultra thick}
		\addlegendimage{area legend, color=darksky, very thick, pattern = north west lines, pattern color=darksky}
		\addlegendimage{area legend, color=seagreen, very thick, pattern = north east lines, pattern color=seagreen}

		\legend{$\Phi$, Expectation $\mathbb{E}$, Functional $\Func = \Phi(\mathbb{E})$, $\mtx{X}^0$, $\mtx{X}^1 \sim_N \mtx{X}^0$}

		\addplot[orange,ultra thick,domain=-1:2,samples=100] {5 * x^2 * (x - 1)^2 - 2 * (x - 0.5)^2 + 0.7};

		\draw[very thick, darksky, fill, pattern=north west lines, pattern color=darksky] (axis cs: -0.10,0.0) rectangle (axis cs: 0.10,0.64);
		\draw[very thick, darksky, fill, pattern=north west lines, pattern color=darksky] (axis cs: 0.90,0.0) rectangle (axis cs: 1.10,0.64);
		\draw[ultra thick, steelblue, dashed] (axis cs: 0.5,0.0) -- (axis cs: 0.5,2.0);
		\draw[ultra thick, rust, dashed] (axis cs: -0.5,1.0125) -- (axis cs: 1.5,1.0125);

	\nextgroupplot[
		standard,
 		xlabel={},
		ylabel={},
% 		xmode=log,
% 		ymode=log,
		height=0.225\textwidth,
		width=0.225\textwidth,
        ymin=-0.00,
        ymax=1.15,
        xmin=-0.50,
        xmax=1.50,
        yticklabels={{$\Func(\mtx{X}^1)$}},
        xticklabels={{$\Expect_{\vct{x} \sim \mtx{X}^1}[\vct{x}]$}},
		% xmajorticks=false,
		xtick={0.40625},
		ytick={0.973335},
		xtick style={draw=none},
		ytick style={draw=none},
		% ymajorticks=false,
		enlarge x limits=0.,
		enlarge y limits=0.,
		]	

		\addplot[orange,ultra thick,domain=-1:2,samples=100] {5 * x^2 * (x - 1)^2 - 2 * (x - 0.5)^2 + 0.7};

		\draw[very thick, seagreen, fill, pattern=north east lines, pattern color=seagreen] (axis cs: -0.10,0.0) rectangle (axis cs: 0.10,0.76);
		\draw[very thick, seagreen, fill, pattern=north east lines, pattern color=seagreen] (axis cs: 0.90,0.0) rectangle (axis cs: 1.10,0.52);
		\draw[ultra thick, steelblue, dashed] (axis cs: 0.40625,0.0) -- (axis cs: 0.40625,2.0);
		\draw[ultra thick, rust, dashed] (axis cs: -0.5,0.973335) -- (axis cs: 1.5,0.973335);

	\nextgroupplot[
		standard,
 		xlabel={},
		ylabel={},
% 		xmode=log,
% 		ymode=log,
		height=0.225\textwidth,
		width=0.225\textwidth,
        ymin=-0.00,
        ymax=1.15,
        xmin=-0.50,
        xmax=1.50,
        yticklabels={{$\Func(\mtx{X}^1)$}},
        xticklabels={{$\Expect_{\vct{x} \sim \mtx{X}^1}[\vct{x}]$}},
		% xmajorticks=false,
		xtick={0.8125},
		ytick={0.620731},
		xtick style={draw=none},
		ytick style={draw=none},
		% ymajorticks=false,
		enlarge x limits=0.,
		enlarge y limits=0.,
		]

		\addplot[orange,ultra thick,domain=-1:2,samples=100] {5 * x^2 * (x - 1)^2 - 2 * (x - 0.5)^2 + 0.7};

		\draw[very thick, seagreen, fill, pattern=north east lines, pattern color=seagreen] (axis cs: -0.10,0.0) rectangle (axis cs: 0.10,0.28);
		\draw[very thick, seagreen, fill, pattern=north east lines, pattern color=seagreen] (axis cs: 0.90,0.0) rectangle (axis cs: 1.10,1.04);
		\draw[ultra thick, steelblue, dashed] (axis cs: 0.8125,0.0) -- (axis cs: 0.8125,2.0);
		\draw[ultra thick, rust, dashed] (axis cs: -0.5,0.620731) -- (axis cs: 1.5,0.620731);
\end{groupplot}

\end{tikzpicture}
  \caption{\textbf{Biased plug-in estimators.} Nonlinear functions $\Phi$ of moments of empirical distributions $\mtx{X}^1 \sim_N \mtx{X}^0$ are biased estimators of the same function of moments of the population distribution $\mtx{X}^0$.}
\end{figure}

\subsubsection*{Theoretical and algorithmic improvements} The M{\"o}bius perspective enables a sharp characterization of the bias reduction of each additional bootstrap iteration, as a function of both $N$ and $m$.
Polynomial approximation yields superalgebraic convergence rates of the bootstrap bias reduction of band-limited functionals, improving bias reduction estimates previously obtained by \cite{koltchinskii2020asymptotically,koltchinskii2021estimation,koltchinskii2022estimation}. 
The detailed understanding of the spectrum of $\mtx{S}$ allows replacing Richardson iteration with more efficient iterations. 
In particular, it yields a modified bootstrap iteration enabling the unbiased estimation of order $m$ moment polynomials (with unknown coefficients) in $m$ iterations.

\section{The Bootstrap}
\label{sec:bootstrap}
\subsection*{Notation}
For a positive integer $m$, we denote as $\Probs(\R^m)$ the set of probability measures on $\R^m$ with finite moments of arbitrary order.
For a distribution $P \in \Probs(\R^m)$ and a positive integer $N$ we write $\mtx{X} \sim_{N} P$ to denote an $N \times m$ matrix containing $N$ independent samples from $P$.
Every such $\mtx{X} \in \R^{N \times m}$ represents an element of $\Probs(\R^m)$ given by the empirical distribution $\left(\delta_{X_{1, :}} + \ldots + \delta_{X_{N, :}}\right)/N$ of its rows.
Thus, $\mtx{X}$ is both a random $N \times m$ data matrix and a random element of $\Probs(\R^m)$. 
The random variable $\mtx{Y} \sim_N \mtx{X}$ is then obtained by sampling $N$ rows of $\mtx{X}$, with replacement. 
Notice that two data matrices of equal dimension represent the same element of $\Probs(\R^m)$ if and only if one is a row permutation of the other. 
We abuse notation slightly to denote all elements of $\Probs(\R^m)$ by bold-face capital letters as if they were represented by a data matrix. 
We denote as $\mtx{X}^{0} \in \Probs(\R^m)$ the (unavailable) population distribution and use superscript indices to denote resampling order, i.e. $\mtx{X}^{i + 1} \sim_N \mtx{X}^{i}$.
Throughout, $\Func \in \R^{\Probs(\R^m)}$ denotes a function from $\Probs(\R^m)$ to the real numbers.
We write $\mtx{x} \sim \mtx{X}$ if the random variable $\vct{x} \in \R^m$ is drawn from the probability distribution (or data matrix) $\mtx{X}$ and denote as $x_{i}$ its $i$-th component.

\subsubsection*{The bias of the plugin estimator}
We are concerned with estimating a functional $F$ of an unknown population distribution $\mtx{X}^0$ from $N$ i.i.d. samples of $\mtx{X}^0$, i.e. a draw from $\mtx{X}^1 \sim_N \mtx{X}^0$. 
A natural approach is the so-called plug-in estimator $F(\mtx{X}^1)$. 
But as illustrated in \cref{fig:plug-in_bias}, this estimator is biased, meaning that $\Expect\limits_{\mtx{X}^1 \sim_N \mtx{X}^0}\left[F(\mtx{X}^1)\right] \neq F(\mtx{X}^0)$. 
This situation is generic whenever $F(\mtx{X})$ is a nonlinear function of statistics of $\mtx{X}$ and thus nonlinear with respect to the vector space structure of measures. 
Examples are covariance matrices, parametric estimates of the Shannon entropy, and inverses of random matrices \cite{valiant2011estimating,derezinski2021sparse,palmer2022calibration,ma2022correcting,epperly2024efficient}.

\begin{figure}
  \label{fig:bootstrap_bias}
  \begin{tikzpicture}
\begin{groupplot}[
	compat=1.3,
	group style={group size=3 by 1,
	horizontal sep=1.5cm,
    vertical sep=0.5cm,},
	]
	\nextgroupplot[
		standard,
 		xlabel={},
		ylabel={},
% 		xmode=log,
% 		ymode=log,
		height=0.225\textwidth,
		width=0.225\textwidth,
        ymin=-0.00,
        ymax=1.15,
        xmin=-0.50,
        xmax=1.50,
        yticklabels={{$\Func(\mtx{X}^0)$}},
        xticklabels={{$\Expect_{\vct{x} \sim \mtx{X}^0}[\vct{x}]$}},
		% xmajorticks=false,
		xtick={0.5},
		ytick={1.0125},
		xtick style={draw=none},
		ytick style={draw=none},
		% ymajorticks=false,
		enlarge x limits=0.,
		enlarge y limits=0.,
		legend style={
			at={(1.80,1.00)},inner sep=3pt,anchor=south,legend columns=6,legend cell align={left}, draw=none,fill=none, /tikz/every even column/.append style={column sep=0.19cm}},
		]	

		\addlegendimage{orange,ultra thick}
		\addlegendimage{steelblue, ultra thick, dashed}
		\addlegendimage{rust, ultra thick, dashed}
		\addlegendimage{area legend, color=darksky, very thick, pattern = north west lines, pattern color=darksky}
		\addlegendimage{area legend, color=seagreen, very thick, pattern = north east lines, pattern color=seagreen}
		\addlegendimage{area legend, color=silver, very thick, pattern = crosshatch, pattern color=silver}

		\legend{$\Phi$, Expectation $\mathbb{E}$, Functional $\Func = \Phi(\mathbb{E})$, $\mtx{X}^0$, $\mtx{X}^1 \sim_N \mtx{X}^0$, $\mtx{X}^2 \sim_N \mtx{X}^1$}

		\addplot[orange,ultra thick,domain=-1:2,samples=100] {5 * x^2 * (x - 1)^2 - 2 * (x - 0.5)^2 + 0.7};

		\draw[very thick, darksky, fill, pattern=north west lines, pattern color=darksky] (axis cs: -0.10,0.0) rectangle (axis cs: 0.10,0.64);
		\draw[very thick, darksky, fill, pattern=north west lines, pattern color=darksky] (axis cs: 0.90,0.0) rectangle (axis cs: 1.10,0.64);
		\draw[ultra thick, steelblue, dashed] (axis cs: 0.5,0.0) -- (axis cs: 0.5,2.0);
		\draw[ultra thick, rust, dashed] (axis cs: -0.5,1.0125) -- (axis cs: 1.5,1.0125);

	\nextgroupplot[
		standard,
 		xlabel={},
		ylabel={},
% 		xmode=log,
% 		ymode=log,
		height=0.225\textwidth,
		width=0.225\textwidth,
        ymin=-0.00,
        ymax=1.15,
        xmin=-0.50,
        xmax=1.50,
        yticklabels={{$\Func(\mtx{X}^1)$}},
        xticklabels={{$\Expect_{\vct{x} \sim \mtx{X}^1}[\vct{x}]$}},
		% xmajorticks=false,
		xtick={0.40625},
		ytick={0.973335},
		xtick style={draw=none},
		ytick style={draw=none},
		% ymajorticks=false,
		enlarge x limits=0.,
		enlarge y limits=0.,
		]	

		\addplot[orange,ultra thick,domain=-1:2,samples=100] {5 * x^2 * (x - 1)^2 - 2 * (x - 0.5)^2 + 0.7};

		\draw[very thick, seagreen, fill, pattern=north east lines, pattern color=seagreen] (axis cs: -0.10,0.0) rectangle (axis cs: 0.10,0.76);
		\draw[very thick, seagreen, fill, pattern=north east lines, pattern color=seagreen] (axis cs: 0.90,0.0) rectangle (axis cs: 1.10,0.52);
		\draw[ultra thick, steelblue, dashed] (axis cs: 0.40625,0.0) -- (axis cs: 0.40625,2.0);
		\draw[ultra thick, rust, dashed] (axis cs: -0.5,0.973335) -- (axis cs: 1.5,0.973335);

	\nextgroupplot[
		standard,
 		xlabel={},
		ylabel={},
% 		xmode=log,
% 		ymode=log,
		height=0.225\textwidth,
		width=0.225\textwidth,
        ymin=-0.00,
        ymax=1.15,
        xmin=-0.50,
        xmax=1.50,
        yticklabels={{$\Func(\mtx{X}^2)$}},
        xticklabels={{$\Expect_{\vct{x} \sim \mtx{X}^2}[\vct{x}]$}},
		% xmajorticks=false,
		xtick={0.8125},
		ytick={0.620731},
		xtick style={draw=none},
		ytick style={draw=none},
		% ymajorticks=false,
		enlarge x limits=0.,
		enlarge y limits=0.,		]

		\addplot[orange,ultra thick,domain=-1:2,samples=100] {5 * x^2 * (x - 1)^2 - 2 * (x - 0.5)^2 + 0.7};

		\draw[very thick, silver, fill, pattern=crosshatch, pattern color=silver] (axis cs: -0.10,0.0) rectangle (axis cs: 0.10,0.28);
		\draw[very thick, silver, fill, pattern=crosshatch, pattern color=silver] (axis cs: 0.90,0.0) rectangle (axis cs: 1.10,1.04);
		\draw[ultra thick, steelblue, dashed] (axis cs: 0.8125,0.0) -- (axis cs: 0.8125,2.0);
		\draw[ultra thick, rust, dashed] (axis cs: -0.5,0.620731) -- (axis cs: 1.5,0.620731);
\end{groupplot}

\end{tikzpicture}
  \caption{\textbf{Bootstrap bias correction.} In outcomes where the empirical distribution $\mtx{X}^1$ represents the population $\mtx{X}^0$ well, the bias in the plugin estimate using a resampled $\mtx{X}^2 \sim_{N} \mtx{X}^1$ reveals the bias due to plugin estimation. Subtracting it from the estimates allows accounting for the alternative outcomes where $\mtx{X}^1$ does not represent $\mtx{X}^0$ well. 
  This is the fundamental mechanism underlying bootstrap bias correction.}
\end{figure}
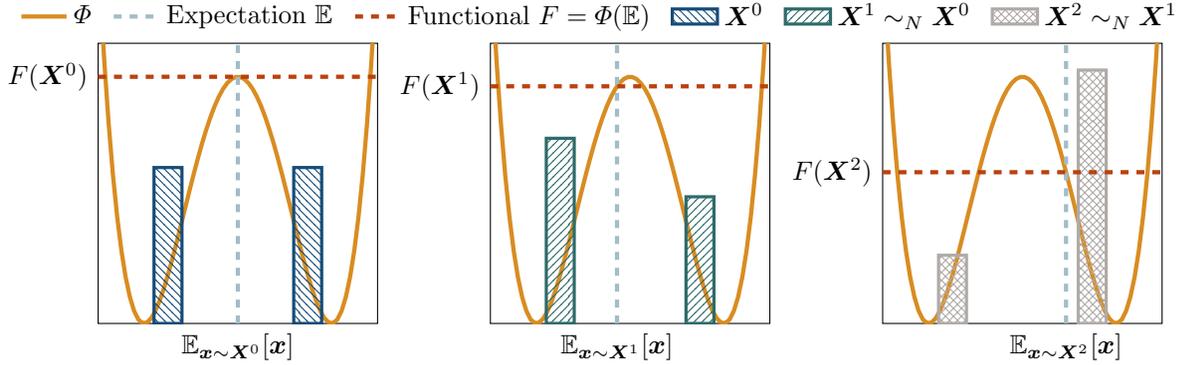

\subsubsection*{Bootstrap bias correction\nopunct} introduces a new random measure $\mtx{X}^2 \sim_N \mtx{X}^1$ obtained from resampling elements of $\mtx{X}^1$ with replacement. 
$\mtx{X}^2$ is obtained from $\mtx{X}^1$ in the same way in which $\mtx{X}^1$ is obtained from $\mtx{X}^0$, suggesting that the bias of the plugin estimator $F(\mtx{X}^2)$ for $F(\mtx{X}^1)$ mimicks that of the plugin estimator $F(\mtx{X}^1)$ for $F(\mtx{X}^0)$ (see \cref{fig:bootstrap_bias}),
\begin{equation}
\Expect\limits_{\mtx{X}^1 \sim_N \mtx{X}^0}\left[F(\mtx{X}^1)\right] - F(\mtx{X}^0) \approx \Expect\limits_{\mtx{X}^1 \sim_N \mtx{X}^0}\left[ \Expect\limits_{\mtx{X}^2 \sim_N \mtx{X}^1} \left[F(\mtx{X}^2)\right] - F(\mtx{X}^1)\right].
\end{equation}
This motivates defining the (affine) bootstrap operator $\Bs: \R^{\Probs(\R^m)} \longrightarrow \R^{\Probs(\R^m)}$ as
\begin{equation}
  [\Bs G](\mtx{X}) \coloneqq (F(\mtx{X}) -  \left(\Expect\limits_{\mtx{Y} \sim_N \mtx{X}} \left[G(\mtx{Y})\right] - G(\mtx{X})\right)
\end{equation}
and using the estimator $[\Bs F](\mtx{X}^1)$ for $F(\mtx{X}^0)$.
The expectation can be approximated by a Monte Carlo sum. 
If bias is the only concern, a single Monte Carlo sample suffices.

\subsubsection*{The iterated bootstrap}
The above argument can be iterated by approximating the bias of the estimator $[\Bs F](\mtx{X}^1)$ for $F(\mtx{X}^0)$ with that of the estimator $[\Bs F](\mtx{X}^2)$ for $F(\mtx{X}^1)$. 
Since $[B\mtx{F}]$ resamples its input, $[B\mtx{F}](\mtx{X}^2)$ involves \emph{two} steps of resampling $\mtx{X}^3 \sim_N \mtx{X}^2 \sim_N \mtx{X}^1$.
Iterating the above argument one obtains the $k$ times iterated bootstrap 
\begin{equation}
  [\Bs^k F](\mtx{X}^1) = \Expect \limits_{\mtx{X}^{k + 1} \sim_N \dots \sim_N \mtx{X}^1} \left[ \sum \limits_{i = 1}^{k + 1} \begin{pmatrix} k + 1\\ i \end{pmatrix} (-1)^{i + 1} F(\mtx{X}^i) \right].
\end{equation}
Under suitable technical conditions and in the limit $N \rightarrow \infty$, the $k$-times iterated bootstrap can reduce the bias to the order $\bigO\left( N^{-(k + 1)}\right)$ \cite{hall2013bootstrap}.  

\subsubsection*{An operator view on the bootstrap} 
As observed by \cite{jiao2020bias,koltchinskii2020asymptotically} the iterated bootstrap approximately inverts the sampling operator $\rS: \R^{\Probs(\R^m)} \longrightarrow \R^{\Probs(\R^m)}$ defined as
\begin{equation}
  [\rS F](\mtx{X}^i) \coloneqq \Expect\limits_{\mtx{X}^{i + 1} \sim_N \mtx{X}^i} \left[F(\mtx{X}^{i + 1})\right].
\end{equation}
\cite{koltchinskii2020asymptotically} also observes that $\rS$ is the adjoint of the transition kernel of the Markov chain on $\Probs(\R^d)$ with transitions given by $\sim_N$.
By definition of $\rS$, the bias of the plugin estimator is given by $F(\mtx{X}^0) - [\rS F]$ and if $\rS^{-1}F$ exists, $[\rS^{-1} F](\mtx{X}^1)$ is an unbiased estimate of $F(\mtx{X}^0)$. 
The iterated bootstrap amounts to a Neumann series approximation of $\rS^{-1},$
\begin{equation}
  \Bs^k F = \sum \limits_{i = 0}^k (\cnst{Id} - \rS)^i F \approx \rS^{-1} F.
\end{equation}
If a Neumann series converges in operator norm, its limit is $\rS^{-1}$, thus proving its existence.
Jiao and Han investigate the properties of $\rS$ and its Neumann series approximation in the setting of quite general $F$ but restricted to the special case of $\mtx{X}^0$ being a binomial distribution \cite{jiao2020bias}. 
Our work complements this approach by treating general $\mtx{X}^0$ in the special case of $F$ being (approximated by) a moment polynomial. 
It relates the bootstrap to the combinatorial procedure of M{\"o}bius inversion, which we now introduce.

\newpage
\section{M{\"o}bius inversion on the partition lattice}

\subsection*{The partition lattice.}
We denote as $\Pi(m)$ the set of all partitions of the set $\{1, \ldots m\}$. 
For $\pi, \sigma \in \Pi(m)$, we write $\pi \leq \sigma$ if $\pi$ is a sub-partition of $\sigma$, obtaining a partial order on $\Pi(m)$.
$\Pi(m)$ with partial order $\leq$ is known as the \emph{partition lattice}.

\subsection*{Moments and cumulants}
To a $p \subset \{1, \ldots, m\}$, we assign $\mu_{p}(\mtx{X}) \coloneqq \Expect \limits_{\vct{x} \sim \mtx{X}}\big[\prod_{i \in p} x_i \big]$, a \emph{moment} of a probability distribution $\mtx{X}$.   
Equivalently, it is defined as the $\prod_{i \in p} \partial_i$ derivative of the moment generating function $\Expect \limits_{\vct{x} \sim \mtx{X}}\left[\exp(\langle x, \cdot \rangle)\right]$ of $\vct{x}$ at zero.  
We further assign to each $p \subset \{1, \ldots, m\}$ a \emph{cumulant} $\kappa_{p}$ that is the  $\prod_{i \in p} \partial_i$ derivative of the log-moment generating function at zero. 
Compared to moments, cumulants have the advantage that they behave predictably under the addition of independent random variables and encode independence and Gaussianity \cite{mccullagh2018tensor}.
Moments and cumulants are polynomials in each other, such as $\kappa_{\{i\}} = \mu_{\{i\}}$, $\kappa_{\{i, j\}} = \mu_{\{i, j\}} - \mu_{\{i\}} \mu_{\{j\}}$, $\kappa_{\{i, j, k\}} = \mu_{\{i, j, k\}} - \mu_{\{i\}} \mu_{\{j, k\}}-  \mu_{\{i, j\}} \mu_{\{k\}} - \mu_{\{i, k\}} \mu_{\{j\}} + 2\mu_{\{i, j, k\}}$.

\begin{figure}
  \label{fig:moebius}
  \centering
  \begin{tikzpicture}
    \begin{scope}[yshift=-4.0cm]
    \node {$\begin{matrix} 
                1| 2 | 3 \\ 
                1  2 | 3 \\ 
                1  3 | 2 \\
                2  3 | 1 \\ 
                1  2   3 
            \end{matrix}$};        
    \end{scope}
    \begin{scope}[yshift=-4.0cm, xshift=3.5cm]
    \node {$\overline{\mtx{S}} = \begin{pmatrix} 
                1 & 0 & 0 & 0 & 0\\
                \color{joshua} 1 & 1 & 0 & 0 & 0\\
                \color{joshua} 1 & 0 & 1 & 0 & 0\\
                \color{joshua} 1 & 0 & 0 & 1 & 0\\
                \color{seagreen} 1 & \color{steelblue} 1 & \color{steelblue}1 & \color{steelblue}1 & 1
            \end{pmatrix}$};        
    \end{scope}

    \begin{scope}[yshift=-4.0cm, xshift=9.0cm]
    \node {$\overline{\mtx{S}}^{-1} = \begin{pmatrix} 
                1 & 0 & 0 & 0 & 0\\
                \color{joshua} - 1 & 1 & 0 & 0 & 0\\
                \color{joshua}- 1 & 0 & 1 & 0 & 0\\
                \color{joshua}- 1 & 0 & 0 & 1 & 0\\
                \color{seagreen}2 & \color{steelblue}-1 & \color{steelblue}- 1 & \color{steelblue}- 1 & 1
            \end{pmatrix}$};        
    \end{scope}
           
    \begin{scope}[xshift=-1cm]
        \node (1 2) at(1, 0.5) {$1|2$};
        \node (12) at(1, -0.5) {$12$};

        \draw [thick] (1 2) -- (12);
    \end{scope}

    \begin{scope}[xshift=1.5cm]
        \node (1 2 3) at(1, 1) {$1|2|3$};
        \node (12 3)  at(0, 0) {$12|3$};
        \node (13 2)  at(1, 0) {$13|2$};
        \node (23 1)  at(2, 0) {$23|1$};
        \node (123)   at(1, -1) {$123$};

        \draw [ultra thick, color=joshua] (1 2 3.south) -- (12 3.north);
        \draw [ultra thick, color=joshua] (1 2 3.south) -- (13 2.north);
        \draw [ultra thick, color=joshua] (1 2 3.south) -- (23 1.north);

        \draw [ultra thick, color=steelblue]  (12 3.south) -- (123.north);
        \draw [ultra thick, color=steelblue]  (13 2.south) -- (123.north);
        \draw [ultra thick, color=steelblue]  (23 1.south) -- (123.north);

    \end{scope}

    \begin{scope}[xshift=8.25cm]
        \node (1 2 3 4) at(0, 2.25) {$1|2|3|4$};
        \node (13 2 4)  at(-2.75, 0.75) {$13|2|4$};
        \node (24 1 3)  at(-1.65, 0.75) {$24|1|3$};
        \node (12 3 4)  at(-0.55, 0.75) {$12|3|4$};
        \node (34 1 2)  at( 0.55, 0.75) {$34|1|2$};
        \node (14 2 3)  at( 1.65, 0.75) {$14|2|3$};
        \node (23 1 4)  at( 2.75, 0.75) {$23|1|4$};

        \node (123 4)  at(-3.3, -0.75) {$123|4$};
        \node (13 24)  at(-2.2, -0.75) {$13|24$};
        \node (124 3)  at(-1.1, -0.75) {$124|3$};
        \node (12 34)  at( 0.0, -0.75) {$12|34$};
        \node (134 2)  at( 1.1, -0.75) {$134|2$};
        \node (14 23)  at( 2.2, -0.75) {$14|23$};
        \node (234 1)  at( 3.3, -0.75) {$234|1$};

        \node (1234) at(0, -2.25) {$1234$};

        \draw [thick] (1 2 3 4.south) -- (13 2 4.north);
        \draw [thick] (1 2 3 4.south) -- (24 1 3.north);
        \draw [thick] (1 2 3 4.south) -- (12 3 4.north);
        \draw [thick] (1 2 3 4.south) -- (34 1 2.north);
        \draw [thick] (1 2 3 4.south) -- (14 2 3.north);
        \draw [thick] (1 2 3 4.south) -- (23 1 4.north);

        \draw [thick] (13 2 4.south) -- (123 4.north);
        \draw [thick] (13 2 4.south) -- (13 24.north);
        \draw [thick] (13 2 4.south) -- (134 2.north);
        \draw [thick] (24 1 3.south) -- (13 24.north);
        \draw [thick] (24 1 3.south) -- (124 3.north);
        \draw [thick] (24 1 3.south) -- (234 1.north);
        \draw [thick] (12 3 4.south) -- (123 4.north);
        \draw [thick] (12 3 4.south) -- (124 3.north);
        \draw [thick] (12 3 4.south) -- (12 34.north);
        \draw [thick] (34 1 2.south) -- (12 34.north);
        \draw [thick] (34 1 2.south) -- (134 2.north);
        \draw [thick] (34 1 2.south) -- (234 1.north);
        \draw [thick] (14 2 3.south) -- (124 3.north);
        \draw [thick] (14 2 3.south) -- (134 2.north);
        \draw [thick] (14 2 3.south) -- (14 23.north);
        \draw [thick] (23 1 4.south) -- (123 4.north);
        \draw [thick] (23 1 4.south) -- (14 23.north);
        \draw [thick] (23 1 4.south) -- (234 1.north);

        \draw [thick] (123 4.south) -- (1234.north);
        \draw [thick] (13 24.south) -- (1234.north);
        \draw [thick] (124 3.south) -- (1234.north);
        \draw [thick] (12 34.south) -- (1234.north);
        \draw [thick] (134 2.south) -- (1234.north);
        \draw [thick] (14 23.south) -- (1234.north);
        \draw [thick] (234 1.south) -- (1234.north);

    \end{scope}
\end{tikzpicture}
  \caption{\textbf{M{\"o}bius inversion on the partition lattice.} The matrix $\overline{\mtx{S}}$ acts on lattice functions, implementing the discrete antiderivative. Its inverse, the M{\"o}bius matrix, implements a lattice version of differentiation. 
  In this work we draw Hasse diagrams of the partition lattice with the finest partition at the top and the coarsest partition at the bottom, which is the opposite of the usual convention.
  This is to match the order of rows and columns of the associated matrices, which is given by the standard partial order of the partition lattice.
  }
\end{figure}

\subsection*{M{\"o}bius inversion\nopunct} allows concisely expressing moment-cumulant relationships. 
For a partially ordered set $\Sigma$ and function $\vct{f} \in \R^{\Sigma}$, we define an indefinite integral $\vct{g} \in \R^{\Sigma}$ by 
\begin{equation}
  g_{\pi} := \sum \limits_{\sigma \leq \pi} f_{\sigma}.  
\end{equation} 
For $\Sigma = \mathbb{Z}$ this is the usual (discrete) antiderivative. 
Defining the matrix $\overline{\mtx{S}} \in \R^{\Sigma \times \Sigma}$ by
\begin{equation}
  \label{eq:mobius_matrix}
  \overline{S}_{\pi \sigma} \coloneqq 
  \begin{cases}
    1, \quad \text{for} \quad \pi \geq \sigma \\
    0, \quad \text{else}
  \end{cases},
\end{equation} 
we can write $\vct{g} = \overline{\mtx{S}} \vct{f}$.  
When ordering the rows and columns of $\overline{\mtx{S}}$ according to $\leq$, it is a lower triangular matrix with unit diagonal and thus invertible. 
Thus, there exists an inverse $\overline{\mtx{S}}^{-1}$ mapping $\vct{g}$ to $\vct{f}$, as illustrated in \cref{fig:moebius}.
For $\Sigma = \mathbb{Z}$, this is the discrete derivative. 
Thus, the \emph{M{\"o}bius function/matrix} $\overline{\mtx{S}}^{-1}$ generalizes differentiation to arbitrary partially ordered sets.

\subsection*{Moments, Cumulants, and M{\"o}bius inversion}
We now consider M{\"o}bius inversion on the partition lattice $\Pi(m)$. 
Define the moment and cumulant products $\vct{\mu}, \vct{\kappa} \in \R^{\Pi(m)}$ by
\begin{equation}
\mu_{\pi} \coloneqq \prod \limits_{s \in \pi} \mu_{s}, \quad \kappa_{\pi} \coloneqq \prod \limits_{s \in \pi} \kappa_{s}.
\end{equation}
Here, in a slight abuse of notation, $\mu,\kappa$ denote moments/cumulants or moment/cumulant products depending on whether the subscript is a subset or partition of $\{1, \ldots, m\}$.
The relationship between moments and cumulants is given by $\vct{\mu} = \overline{\mtx{S}} \vct{\kappa}$, where $\overline{\mtx{S}}^{-1}$ is the M{\"o}bius matrix on $\Pi(m)$.
In other words, the cumulant products are the ``derivatives'' of the moment products. 
The intuition is that the cumulant $\kappa_{\{1, \ldots, m\}}$ expresses the part of $\mu_{\{1, \ldots, m\}}$ that is not already captured by products of lower-order moments.  
\cref{fig:moebius} illustrates the relationship of the M{\"o}bius matrix and its inverse to the partition lattice.

\section{The bootstrap on moment polynomials}
\subsection*{Moment polynomials}
We now study the action of the resampling operator $\rS$ on the finite-dimensional subspace of $\R^{\Probs(\R^m)}$ consisting of \emph{moment polynomials} of the form
\begin{equation}
  F\left(\mtx{X}\right) = \sum_{\pi \in \Pi(m)} f_{\pi} \mu_{\pi} \left(\mtx{X}\right),
\end{equation}
for coefficient vectors $\vct{f} \in \R^{\Pi(m)}$. 
Throughout this section, we assume that $F$ is of this form. 

\begin{figure}
  \label{fig:equivalent_constant}
  \centering
  \newcommand{\elm}[3]{\!\!\!\!\!\color{#1}\frac{\frac{N!}{(N - #2)!}}{N^#3}\!\!\!\!\!}
\begin{tikzpicture}
%     \begin{scope}[yshift=-4.0cm]
%     \node {$\begin{matrix} 
%                 1| 2 | 3 \\ 
%                 1  2 | 3 \\ 
%                 1  3 | 2 \\
%                 2  3 | 1 \\ 
%                 1  2   3 
%             \end{matrix}$};        
%     \end{scope}

    \begin{scope}[yshift=-0.0cm, xshift=1.5cm]
    \node {$\vct{i} = 
        \begin{cases}
            \left({\color{seagreen} 3, 3}, {\color{rust} 5}\right) \\
            \left({\color{orange} 1, 1, 1}\right) \\
            \left({\color{steelblue} 2}, {\color{silver} 4, 4}\right)
        \end{cases}
        \ 
        \vecpart\left(\vct{i}\right) = 
        \begin{cases}
            \left\{{\color{seagreen} \{1, 2\}}, {\color{rust} \{3\}}\right\} \\
            \left\{{\color{orange} \{1, 2, 3\}}\right\} \\
            \left\{{\color{steelblue} \{1\}}, {\color{silver} \{2, 3\}}\right\}
        \end{cases}
        \
        \begin{cases}
            \vct{i} \in \allrespects{5}{\left\{{\color{seagreen}\{1, 2\}}, {\color{rust}\{3\}}\right\}}, \ \vct{i} \in \allconstant{5}{\left\{{\color{seagreen}\{1, 2\}}, {\color{rust}\{3\}}\right\}}  \\
            \vct{i} \notin \allrespects{5}{\left\{{\color{seagreen}\{1, 2\}}, {\color{rust}\{3\}}\right\}}, \ \vct{i} \in \allconstant{5}{\left\{{\color{seagreen}\{1, 2\}}, {\color{rust}\{3\}}\right\}}  \\
            \vct{i} \notin \allrespects{5}{\left\{{\color{seagreen}\{1, 2\}}, {\color{rust}\{3\}}\right\}}, \ \vct{i} \notin \allconstant{5}{\left\{{\color{seagreen}\{1, 2\}}, {\color{rust}\{3\}}\right\}}        \end{cases}
        $};        
    \end{scope}
\end{tikzpicture}
  \vspace{-7mm}
  \caption{\textbf{Multiindices and partitions.} To each multiindex $\vct{i} \in \{1, \ldots, N\}^m$, such as the the three examples $(3, 3, 5), (1, 1, 1,), (2, 4, 4) \in \{1, \ldots, 5\}^3$, we can associate a partition $\vecpart\left(\vct{i}\right) \in \Pi(m)$ obtained by assigning to the same part those indices that are mapped to the same value.
  For $\pi \in \Pi(m)$, we write $\vct{i} \in \allrespects{N}{\pi}$ if the multiindex is equivalent to $\pi$ in the sense that $\vecpart(\vct{i}) = \pi$. 
  We write $\vct{i} \in \allconstant{N}{\pi}$ if the multiindex is coarser than $\pi$ in the sense that $\vecpart(\vct{i}) \geq \pi$, meaning that $\vecpart(\vct{i})$ can be obtained by merging parts of $\pi$.}
\end{figure}

\subsection*{Notation on index sets} To every multiindex $\mtx{i} \in \{1, \dots, N\}^m$ we can associate a unique partition $\vecpart(\mtx{i}) \in \Pi(m)$ that assigns indices $k, l \in \{1, \ldots, m\}$ to the same part if and only if $i_k = i_l$.
We denote as $\allconstant{N}{\pi} \coloneqq \left(\mtx{i} \in \{1, \dots, N\}^m \middle| \vecpart(\mtx{i}) \geq \pi\right)$ the set of multiindices coarser than $\pi$, in the sense that $\vecpart\left(\vct{i}\right)$ can be obtained by merging parts of $\pi$. 
We further denote as $\allrespects{N}{\pi} \coloneqq \left(\mtx{i} \in \{1, \dots, N\}^m \middle| \vecpart(\mtx{i}) = \pi\right)$ the set of multiindices equivalent to $\pi$, meaning that $\vecpart\left(\vct{i}\right)$ equals $\pi$. 
For instance, $\mtx{i} = (3, 3, 5)$ is equivalent to $\pi = \{\{1, 2\}, \{3\}\}$, but neither $(1, 1, 1)$ nor $(2, 4, 4)$ are, even though the former is coarser than $\pi$ (see \cref{fig:equivalent_constant} for an illustration).
For $\# \mathbb{I}$ the cardinality of a set $\mathbb{I}$, we see that $\# \allconstant{N}{\pi} = N^{\# \pi}$ and $\# \allrespects{N}{\pi} = \frac{N!}{(N - \# \pi)!}$.
In particular, for $N < \# \pi$, $\allrespects{N}{\pi} = \emptyset$.
The sets $\allconstant{N}{\pi}$ and $\allrespects{N}{\pi}$ are related as follows.

\begin{lemma}
  \label{lem:constantrespect}
  For any $m, N \in \N_{>0}$ and $\pi \in \Pi(m)$, we have  
  \begin{equation}
    \allconstant{N}{\pi} = \mathop{\dot{\bigcup}}_{\pi \leq \tilde{\pi}} \allrespects{N}{\tilde{\pi}}, \quad \quad \quad  \allconstant{N}{\min(\Pi(m))} = \{1, \ldots, N\}^m = \mathop{\dot{\bigcup}}_{\pi \in \Pi(m)} \allrespects{N}{\pi},
  \end{equation}
  where the unions are disjoint and $\min(\Pi(m)) = \{\{i\}\}_{1 \leq i \leq m}$ is finest partition possible.
\end{lemma}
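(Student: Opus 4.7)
The plan is to establish the first identity by constructing, for each $\mtx{i} \in \allconstant{N}{\pi}$, a canonical partition $\tilde{\pi}(\mtx{i}) \in \Pi(m)$ with $\pi \leq \tilde{\pi}(\mtx{i})$ that $\mtx{i}$ respects, and then to show this partition is the unique element of $\{\tilde{\pi} \geq \pi : \mtx{i} \in \allrespects{N}{\tilde{\pi}}\}$. The second identity will then follow by specializing to $\pi = \min(\Pi(m))$, since every multi-index is trivially constant on the partition into singletons.

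First, for $\mtx{i} \in \{1, \ldots, N\}^m$, define $\tilde{\pi}(\mtx{i})$ to be the partition of $\{1, \ldots, m\}$ induced by the equivalence relation $a \sim b \iff i_a = i_b$. By construction, $\mtx{i}$ is constant on each block of $\tilde{\pi}(\mtx{i})$, and by definition of the equivalence classes, no two distinct blocks of $\tilde{\pi}(\mtx{i})$ share an index value. Hence $\mtx{i} \in \allrespects{N}{\tilde{\pi}(\mtx{i})}$. Moreover, if $\mtx{i} \in \allconstant{N}{\pi}$, then every block of $\pi$ lies within a single equivalence class of $\sim$, so $\pi \leq \tilde{\pi}(\mtx{i})$.

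For the reverse inclusion and disjointness, observe that if $\mtx{i} \in \allrespects{N}{\tilde{\pi}}$ for some $\tilde{\pi} \geq \pi$, then $\mtx{i}$ is constant on $\tilde{\pi}$ and therefore constant on the sub-partition $\pi$, giving $\mtx{i} \in \allconstant{N}{\pi}$. Furthermore, respecting $\tilde{\pi}$ forces $\tilde{\pi}$ to be exactly the equivalence partition $\tilde{\pi}(\mtx{i})$: two indices $a, b$ lie in the same block of $\tilde{\pi}$ iff $i_a = i_b$. Hence no $\mtx{i}$ lies in $\allrespects{N}{\tilde{\pi}_1}$ and $\allrespects{N}{\tilde{\pi}_2}$ for distinct $\tilde{\pi}_1, \tilde{\pi}_2$, proving disjointness of the union.

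Combining the previous two paragraphs yields the first identity. For the second, note that $\min(\Pi(m)) = \{\{1\}, \ldots, \{m\}\}$ imposes no constraint, so $\allconstant{N}{\min(\Pi(m))} = \{1, \ldots, N\}^m$, and the first identity applied with $\pi = \min(\Pi(m))$ gives the disjoint decomposition over all $\tilde{\pi} \in \Pi(m)$. No substantial obstacle arises; the only subtle point is recognizing that "respecting $\tilde{\pi}$" encodes exactly the equivalence relation on indices of $\mtx{i}$, which is what makes the decomposition a partition rather than just a cover.
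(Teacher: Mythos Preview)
Your proof is correct and follows essentially the same approach as the paper: both arguments associate to each multi-index $\mtx{i}$ the canonical partition it respects (you via the equivalence relation $a \sim b \iff i_a = i_b$, the paper by ``merging parts of $\pi$ on which $\vct{i}$ takes on the same value''), and both observe that this partition coarsens any $\pi$ on which $\mtx{i}$ is constant. Your treatment of disjointness---that respecting $\tilde{\pi}$ forces $\tilde{\pi} = \tilde{\pi}(\mtx{i})$---is arguably cleaner than the paper's direct incomparability argument, but the underlying idea is the same.
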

\begin{proof}
  For $\tilde{\pi}, \tilde{\sigma} \in \Pi(m)$ with $\tilde{\pi} \neq \tilde{\sigma}$, we have $\allrespects{N}{\tilde{\pi}} \cap \allrespects{N}{\tilde{\sigma}} = \emptyset$, since for a $\vct{i}$ in that intersection we would have $\vecpart(\vct{i}) = \tilde{\pi}$ \emph{and} $\vecpart(\vct{i}) = \tilde{\sigma}$.
  The result now follows since any $\vct{i} \in \allconstant{N}{\pi}$ satisfies $\vecpart(\vct{i}) \geq \pi$ and $\vct{i} \in \allconstant{N}{\vecpart\left(\vct{i}\right)}$.
\end{proof}

\subsection*{Unbiased estimators of moment products} 
Multiindices equivalent to a partition define unbiased estimators of moment products, the so-called \emph{symmetric statistics} \cite{mccullagh2018tensor}.
\begin{lemma}
  \label{lem:symstat}
  For all $\pi \in \Pi(m)$, $N \geq m$, and $\vct{i} \in \allrespects{N}{\pi}$, we have for $\mtx{Y} \sim_{N} \mtx{X}$,
  \begin{equation}
    \label{eq:symmetric_statistics}
    \mu_{\pi}(\mtx{X}) = \Expect\left[\prod \limits_{j \in \{1, \ldots, m\}} Y_{\vct{i}_j j}\right] \implies     
    \mu_{\pi}(\mtx{X}) = \Expect \left[\frac{1}{\# \allrespects{N}{\pi}}\sum_{\mtx{i} \in \allrespects{N}{\pi}} \prod \limits_{j \in \{1, \ldots, m\}} Y_{\vct{i}_j j}\right].
  \end{equation}
\end{lemma}
\begin{proof}
  The first equality follows by observing that the rows of $\mtx{Y}$ are distributed according to $\mtx{X}$ and independent of each other.
  Thus, we can compute
  \begin{equation*}
    \Expect\left[\prod \limits_{j \in \{1, \ldots, m\}} Y_{\vct{i}_j j}\right]
    = \Expect\left[\prod \limits_{p \in \pi} \left(\prod \limits_{j \in p} Y_{\vct{i}_j j}\right)\right] 
    = \prod \limits_{p \in \pi} \Expect\left[\left(\prod \limits_{j \in p} Y_{\vct{i}_j j}\right)\right] 
    = \prod \limits_{p \in \pi} \mu_p\left(\mtx{X}\right)
    = \mu_{\pi}\left(\mtx{X}\right). 
  \end{equation*}
  The second equality follows from the first by pulling the expectation through the sum.
\end{proof}

\subsection*{The plug-in estimator and its mean}
Our motivation for using bootstrap bias correction is that the plug-in estimator applied to moment polynomials is biased, in the sense that $\Expect_{\mtx{Y} \sim_N \mtx{X}}\left[\mu_{\pi}\left(\mtx{Y}\right)\right]$ differs from $\Expect\left[\mu_\pi \left(\mtx{X}\right)\right]$.
We will now derive an explicit form of the former.
\begin{lemma}
  \label{lem:plug-in}
  For all $\pi \in \Pi(m)$ and $N \geq m$, we have for $\mtx{Y} \sim_{N} \mtx{X}$, 
  \begin{equation}
    \Expect\left[\mu_{\pi}(\mtx{Y})\right] = \Expect \left[\frac{1}{\# \allconstant{N}{\pi}}\sum_{\mtx{i} \in \allconstant{N}{\pi}} \prod \limits_{j \in \{1, \ldots, m\}} Y_{i_j j}\right].
  \end{equation}
\end{lemma}
\begin{proof}
  We begin by computing 
  \begin{align*}
    \Expect\left[\mu_{\pi}(\mtx{Y})\right] 
    = \Expect\left[\prod \limits_{p \in \pi} \left(\Expect_{\vct{y} \sim \mtx{Y}} \left[\prod_{j \in p} y_{j}\right]\right)\right]
    = \Expect\left[\prod \limits_{p \in \pi}\left( \frac{1}{N} \sum_{i = 1}^N \left[\prod_{j \in p} Y_{i, j}\right]\right)\right].
  \end{align*}
  The term inside the expectation is a product of sums. 
  Writing $p_1, \ldots, p_{\#\pi}$ for the parts of $\pi$, we can expand the product of sums to a sum of products as
  \begin{equation}
    \prod \limits_{p \in \pi}\left( \frac{1}{N} \sum_{i = 1}^N \left[\prod_{j \in p} Y_{i, j}\right]\right)
    = \frac{1}{N^{\#\pi}} \sum \limits_{1 \leq i_1, i_2, \ldots, i_{\# \pi} \leq N} \prod \limits_{k = 1}^{\# \pi} \left( \prod \limits_{j \in p_k} Y_{i_k, j}\right)
    = \frac{1}{N^{\#\pi}} \sum \limits_{\mtx{i} \in \allconstant{N}{\pi}} \prod \limits_{j = 1}^m Y_{i_j, j}.
  \end{equation}
  Observing that $\# \allconstant{N}{\pi} = N^{\# \pi}$, we obtain the result. 
\end{proof}
Thus, the bias of the plug-in estimate for the moment product $\mu_{\pi}$ is characterized by replacing $\allrespects{N}{\pi}$ with $\allconstant{N}{\pi}$ in \cref{eq:symmetric_statistics}.
This will now help us understand the resampling operator $\rS$.

% Computing $\rS^{-1} F$ of a moment product with coefficient vector $\vct{f}$ thus amounts to replacing 
% \begin{equation}
%   \sum \limits_{\pi \in \Pi(m)} f_{\pi} \Expect \left[\frac{1}{\# \allconstant{N}{\pi}}\sum_{\mtx{i} \in \allconstant{N}{\pi}} \prod \limits_{j \in \{1, \ldots, m\}} Y_{i_j j}\right] \ \text{by} \ \sum \limits_{\pi \in \Pi(m)} f_{\pi} \Expect \left[\frac{1}{\# \allrespects{N}{\pi}}\sum_{\mtx{i} \in \allrespects{N}{\pi}} \prod \limits_{j \in \{1, \ldots, m\}} Y_{i_j j}\right].
% \end{equation} 
% For a known moment polynomial, this is a well-known approach for computing unbiased estimators.
% We now show that the bootstrap achieves this goal without knowing the coefficient vector $\vct{f}$.

\subsection*{The action of $\rS$ on moment products} \cref{lem:plug-in} shows that the expectation of the plug-in estimator is given as a sum of terms of the form $\Expect\left[\prod_{j \in \{1, \ldots m\}} Y_{i_j, j}\right]$, over elements of $\allconstant{N}{\pi}$.
But each such element $\vct{i} \in \allconstant{N}{\pi}$ is itself in $\allrespects{N}{\vecpart(\vct{i})}$ and thus by \cref{lem:symstat}, we have $\Expect\left[\prod_{j \in \{1, \ldots m\}} Y_{i_j, j}\right] = \mu_{\vecpart(\vct{i})}\left(\vct{X}\right).$
Thus, the plug-in estimator of a moment polynomial is itself a moment polynomials of the same order. 
In other words, the moment polynomials are an invariant subspace of the linear operator $\rS$.

\begin{theorem}
  \label{thm:bootstrap_moment_poly}
  The subspace $\vspan\left(\left\{\mu_{\pi}\right\}_{\pi \in \Pi(m)}\right) \subset \R^{\Probs\left(\R^m\right)}$ is closed under the action of $\rS$. 
  On this subspace, $\rS$ takes the form
  \begin{equation}
    \rS \mu_{\sigma} = \sum \limits_{\pi \geq \sigma}\frac{\# \allrespects{N}{\pi}}{\# \allconstant{N}{\sigma}} \mu_{\pi}.
  \end{equation}
  In particular, the action of $\rS$ on the moment polynomials is represented by multiplying the coefficient vector $\vct{f} \in \R^{\Pi(m)}$ with the matrix $\mtx{S} \in \R^{\Pi(m) \times \Pi(m)}$ defined by 
  \begin{equation}
    S_{\pi, \sigma} = 
    \begin{cases}
      \frac{\# \allrespects{N}{\pi}}{\# \allconstant{N}{\sigma}} \quad &\text{for} \quad \sigma \leq \pi, \\
      0, \quad &\text{else}
    \end{cases}.
  \end{equation}
\end{theorem}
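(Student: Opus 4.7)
The plan is to compute $[\rS\mu_\pi](\mtx{X}) = \Expect_{\mtx{Y}\sim_N\mtx{X}}[\mu_\pi(\mtx{Y})]$ by direct expansion, relying only on Lemmas \ref{lem:constantrespect} and \ref{lem:symstat}. Since the output will automatically be a linear combination of moment products $\mu_\sigma$, closedness of $\vspan\{\mu_\pi\}_{\pi\in\Pi(m)}$ under $\rS$ and the matrix form of $\mtx{S}$ fall out simultaneously.

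First, I would expand $\mu_\pi(\mtx{Y}) = \prod_{s\in\pi}\mu_s(\mtx{Y})$ by writing each factor as an empirical average over the rows of $\mtx{Y}$ and distributing the product. The resulting sum is over tuples of one row index per part of $\pi$, which is exactly the set $\allconstant{N}{\pi}$. This recovers the first identity of Lemma \ref{lem:symstat} at the level of random variables, before any expectation is taken:
\begin{equation*}
\mu_\pi(\mtx{Y}) = \frac{1}{\#\allconstant{N}{\pi}} \sum_{\mtx{i}\in\allconstant{N}{\pi}} \prod_{j=1}^m Y_{\mtx{i}_j,\, j}.
\end{equation*}

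Second, I would invoke Lemma \ref{lem:constantrespect} to split $\allconstant{N}{\pi}$ as the disjoint union of the sets $\allrespects{N}{\tilde{\pi}}$ indexed by the partitions $\tilde{\pi}$ related to $\pi$ as in that lemma. Taking $\Expect_{\mtx{Y}\sim_N\mtx{X}}$ commutes with this finite double sum. For each fixed $\tilde{\pi}$ and each $\mtx{i}\in\allrespects{N}{\tilde{\pi}}$, row indices coming from distinct parts of $\tilde{\pi}$ are distinct, so the selected rows of $\mtx{Y}$ are independent; the second identity of Lemma \ref{lem:symstat} then yields
\begin{equation*}
\Expect\!\left[\sum_{\mtx{i}\in\allrespects{N}{\tilde{\pi}}} \prod_{j=1}^m Y_{\mtx{i}_j,\, j}\right] = \#\allrespects{N}{\tilde{\pi}}\cdot\mu_{\tilde{\pi}}(\mtx{X}).
\end{equation*}

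Substituting back expresses $[\rS\mu_\pi](\mtx{X})$ as a sum of moment products $\mu_{\tilde{\pi}}(\mtx{X})$ whose coefficients are the ratios $\#\allrespects{N}{\tilde{\pi}}/\#\allconstant{N}{\pi}$, with $\tilde{\pi}$ ranging over exactly the partitions appearing in Lemma \ref{lem:constantrespect}. This simultaneously establishes $\rS$-invariance of $\vspan\{\mu_\pi\}$ and, after renaming the summation index, reads off the entries of $\mtx{S}$. There is no real technical obstacle; the only care required is in tracking the lattice direction along which Lemma \ref{lem:constantrespect} decomposes $\allconstant{N}{\pi}$, since this is what fixes the triangular structure of $\mtx{S}$.
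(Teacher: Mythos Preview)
Your proposal is correct and follows essentially the same route as the paper: expand $\mu_\pi(\mtx{Y})$ as a sum over $\allconstant{N}{\pi}$, split that sum via Lemma~\ref{lem:constantrespect} into the disjoint pieces $\allrespects{N}{\tilde{\pi}}$ for $\tilde{\pi}\geq\pi$, and apply Lemma~\ref{lem:symstat} to each piece to produce $\mu_{\tilde{\pi}}(\mtx{X})$. Your caution about the lattice direction is well placed---the decomposition runs over \emph{coarsenings} $\tilde{\pi}\geq\pi$, and the matrix entries $S_{\pi,\sigma}$ then arise after swapping the roles of input and output partitions when passing from the basis expansion to the coefficient map.
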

\begin{proof}
  We compute 
  \begin{align*}
    &\left(\rS \mu_\sigma\right)\left(\mtx{X}\right) 
    =     \Expect_{\mtx{Y} \sim_N \mtx{X}}\left[\mu_{\sigma}(\mtx{Y})\right] = \Expect \left[\frac{1}{\# \allconstant{N}{\sigma}}\sum_{\mtx{i} \in \allconstant{N}{\sigma}} \prod \limits_{j \in \{1, \ldots, m\}} Y_{i_j j}\right]\\
    =&     \Expect \left[\frac{1}{\# \allconstant{N}{\sigma}} \sum_{\pi \geq \sigma} \sum_{\mtx{i} \in \allrespects{N}{\pi}} \prod \limits_{j \in \{1, \ldots, m\}} Y_{i_j j}\right]
    =   \sum_{\pi \geq \sigma} \frac{\allrespects{N}{\pi}}{\# \allconstant{N}{\sigma}}  \Expect \left[ \frac{1}{\# \allrespects{N}{\pi}}\sum_{\mtx{i} \in \allrespects{N}{\pi}} \prod \limits_{j \in \{1, \ldots, m\}} Y_{i_j j}\right]\\
    & = \sum_{\pi \geq \sigma} \frac{\allrespects{N}{\pi}}{\# \allconstant{N}{\sigma}}  \mu_{\pi}\left(\mtx{X}\right)
  \end{align*}
  Here, the second equality follows from \cref{lem:plug-in}, the third one from \cref{lem:constantrespect} and the last one from \cref{lem:symstat}.
\end{proof}

\subsubsection*{The bootstrap computes M{\"o}bius inversion} We are now ready to deduce the relationship between resampling (and thus, the bootstrap) and M{\"o}bius inversion.
\begin{corollary}
  Defining the diagonal matrices $\mtx{R}, \mtx{C} \in \R^{\Pi(m) \times \Pi(m)}$ as 
  \begin{equation}
    R_{\pi, \sigma} = 
    \begin{cases}
      \# \allrespects{N}{\pi}, \quad &\text{for} \quad \pi = \sigma \\
      0, \quad &\text{else} 
    \end{cases}
    \quad
    \text{and}
    \quad 
    C_{\pi, \sigma} = 
    \begin{cases}
      \# \allconstant{N}{\pi}, \quad &\text{for} \quad \pi = \sigma\\
      0, \quad &\text{else} 
    \end{cases}.
  \end{equation}
  we have $\mtx{S} = \mtx{C}^{-1} \overline{\mtx{S}} \mtx{R}$, for $\overline{\mtx{S}}$ the inverse M{\"o}bius matrix defined in \cref{eq:mobius_matrix}.
  Up to diagonal scaling, the bootstrap computes M{\"o}bius inversion on $\Pi(m)$. 
\end{corollary}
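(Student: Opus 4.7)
The proof should be a short matrix-entry calculation relying on the explicit formula from \cref{thm:bootstrap_moment_poly}. The closed form
\begin{equation*}
S_{\pi, \sigma} = \frac{\# \allrespects{N}{\pi}}{\# \allconstant{N}{\sigma}}\,\mathbbm{1}[\sigma \leq \pi]
\end{equation*}
already factors as a function of $\pi$ alone, times an order indicator, times a function of $\sigma$ alone, which is exactly the algebraic content of the stated identity. My plan is to expand $(\mtx{C}^{-1} \overline{\mtx{S}} \mtx{R})_{\pi, \sigma}$ entry-wise, use that $\mtx{C}^{-1}$ and $\mtx{R}$ are diagonal to collapse the double sum to the single term $(C^{-1})_{\pi, \pi}\,\overline{S}_{\pi, \sigma}\,R_{\sigma, \sigma}$, and then substitute the three factor definitions together with the ordering property of the M{\"o}bius matrix $\overline{\mtx{S}}$. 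Matching the resulting support condition and scalar value with the formula for $S_{\pi, \sigma}$ closes the argument.

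The second, interpretive sentence of the corollary, that ``up to diagonal scaling, the bootstrap computes M{\"o}bius inversion on $\Pi(m)$'', is then immediate from the factorization. Inverting both sides gives $\mtx{S}^{-1} = \mtx{R}^{-1} \overline{\mtx{S}}^{-1} \mtx{C}$, so unbiased estimation of a moment polynomial amounts to one diagonal rescaling, one application of the M{\"o}bius inverse on $\Pi(m)$, and one further diagonal rescaling. This sits naturally alongside the operator-theoretic view of the iterated bootstrap as Neumann iteration.

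I do not foresee a substantive obstacle; the argument is essentially bookkeeping once \cref{thm:bootstrap_moment_poly} is in hand. The only thing to verify carefully is the row/column orientation between $\mtx{S}$ and $\overline{\mtx{S}}$ together with the direction of the partial order $\leq$ on $\Pi(m)$, so that the support of the matrix product is exactly $\{\sigma \leq \pi\}$ rather than its transpose.
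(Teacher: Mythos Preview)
Your approach is correct and is exactly how the paper treats this corollary: as an immediate entrywise consequence of the matrix formula in \cref{thm:bootstrap_moment_poly}, with no separate proof given. Your caution about row/column orientation is well placed, since the diagonal factors $\mtx{R}$ and $\mtx{C}^{-1}$ must land on the correct side of $\overline{\mtx{S}}$ for the $(\pi,\sigma)$-indexing of $S_{\pi,\sigma}$ to come out right.
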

As noted by \cite{koltchinskii2020asymptotically}, $\rS$ is the adjoint of the transition kernel of the Markov chain on $\Probs(\R^m)$ with transitions given by $\sim_N$.
Thus, its matrix representative $\mtx{S}$ is left stochastic. 
The operation $\vct{f} \mapsto \mtx{S} \vct{f}$ amounts to $\vct{f}$ percolating along the partition lattice, from finer to coarser partitions. 

\begin{figure}
  \label{fig:bootstrap_moebius}
  \centering
  \newcommand{\elm}[3]{\!\!\!\!\!\color{#1}\frac{\frac{N!}{(N - #2)!}}{N^#3}\!\!\!\!\!}
\begin{tikzpicture}
%     \begin{scope}[yshift=-4.0cm]
%     \node {$\begin{matrix} 
%                 1| 2 | 3 \\ 
%                 1  2 | 3 \\ 
%                 1  3 | 2 \\
%                 2  3 | 1 \\ 
%                 1  2   3 
%             \end{matrix}$};        
%     \end{scope}

    \begin{scope}[yshift=-0.0cm, xshift=1.5cm]
    \node {$\underbrace{\tiny \begin{pmatrix} 
                \!\!\! \ddots \!\!\!\! & \cdots & \cdots & \cdots & \cdots & \cdots & \cdots & \cdots & \cdots & \cdots \\
                \!\!\! \vdots \!\!\!\! & \elm{orange}{3}{3} & 0 & 0 & 0 & 0 & 0 & 0 & 0 & 0 \\
                \!\!\! \vdots \!\!\!\! & \elm{orange}{2}{3}  & \elm{black}{2}{2}  & 0 & 0 & 0 & 0 & 0 & 0 & 0 \\
                \!\!\! \vdots \!\!\!\! & 0 & 0 & \elm{darksky}{2}{2}  & 0 & 0 & 0 & 0 & 0 & 0 \\
                \!\!\! \vdots \!\!\!\! & 0 & 0 & 0 & \elm{black}{2}{2}  & 0 & 0 & 0 & 0 & 0 \\
                \!\!\! \vdots \!\!\!\! & 0 & 0 & 0 & 0 & \elm{seagreen}{2}{2}  & 0 & 0 & 0 & 0 \\
                \!\!\! \vdots \!\!\!\! & 0 & 0 & 0 & 0 & 0 & \elm{rust}{2}{2}  & 0 & 0 & 0 \\
                \!\!\! \vdots \!\!\!\! & \elm{orange}{2}{3}  & 0 & 0 & 0 & 0 & 0 & \elm{black}{2}{2}  & 0 & 0 \\
                \!\!\! \vdots \!\!\!\! & \elm{orange}{2}{3}  & 0 & 0 & 0 & 0 & 0 & 1 & \elm{black}{2}{2}  & 0 \\
                \!\!\! \vdots \!\!\!\! & \elm{orange}{1}{3}  & \elm{black}{1}{2} & \elm{darksky}{1}{2} & \elm{black}{1}{2} & \elm{seagreen}{1}{2} & \elm{rust}{1}{2} & \elm{black}{1}{2} & \elm{black}{1}{2} & \elm{black}{1}{1}\,\, \, \,    
            \end{pmatrix}}_{\mtx{S} = }$};        
    \end{scope}

    \begin{scope}[xshift=9.15cm]
        \node (1 2 3 4) at(0, 3.0) {$1|2|3|4$};
        \node (13 2 4)  at(-2.75, 1.0) {$13|2|4$};
        \node (24 1 3)  at(-1.65, 1.0) {$24|1|3$};
        \node (12 3 4)  at(-0.55, 1.0) {$12|3|4$};
        \node (34 1 2)  at( 0.55, 1.0) {$34|1|2$};
        \node (14 2 3)  at( 1.65, 1.0) {$14|2|3$};
        \node (23 1 4)  at( 2.75, 1.0) {\color{orange} $23|1|4$};

        \node (123 4)  at(-2.85, -1.0) {\color{orange} $123|4$};
        \node (13 24)  at(-1.9, -1.0) {\color{darksky} $13|24$};
        \node (124 3)  at(-0.95, -1.0) {$124|3$};
        \node (12 34)  at( 0.0, -1.0) {\color{seagreen} $12|34$};
        \node (134 2)  at( 0.95, -1.0) {\color{rust} $134|2$};
        \node (14 23)  at( 1.9, -1.0) {\color{orange} $14|23$};
        \node (234 1)  at( 2.85, -1.0) {\color{orange} $234|1$};

        \node (1234) at(0, -3.0) {$\color{orange}1\color{darksky}2\color{seagreen}3\color{rust}4$};

        \draw [thick] (1 2 3 4.south) -- (13 2 4.north);
        \draw [thick] (1 2 3 4.south) -- (24 1 3.north);
        \draw [thick] (1 2 3 4.south) -- (12 3 4.north);
        \draw [thick] (1 2 3 4.south) -- (34 1 2.north);
        \draw [thick] (1 2 3 4.south) -- (14 2 3.north);
        \draw [thick] (1 2 3 4.south) -- (23 1 4.north);

        \draw [thick] (13 2 4.south) -- (123 4.north);
        \draw [thick] (13 2 4.south) -- (13 24.north);
        \draw [thick] (13 2 4.south) -- (134 2.north);
        \draw [thick] (24 1 3.south) -- (13 24.north);
        \draw [thick] (24 1 3.south) -- (124 3.north);
        \draw [thick] (24 1 3.south) -- (234 1.north);
        \draw [thick] (12 3 4.south) -- (123 4.north);
        \draw [thick] (12 3 4.south) -- (124 3.north);
        \draw [thick] (12 3 4.south) -- (12 34.north);
        \draw [thick] (34 1 2.south) -- (12 34.north);
        \draw [thick] (34 1 2.south) -- (134 2.north);
        \draw [thick] (34 1 2.south) -- (234 1.north);
        \draw [thick] (14 2 3.south) -- (124 3.north);
        \draw [thick] (14 2 3.south) -- (134 2.north);
        \draw [thick] (14 2 3.south) -- (14 23.north);
        \draw [ultra thick, color=orange] (23 1 4.south) -- (123 4.north);
        \draw [ultra thick, color=orange] (23 1 4.south) -- (14 23.north);
        \draw [ultra thick, color=orange] (23 1 4.south) -- (234 1.north);

        \draw [ultra thick, color=orange] (123 4.south) -- (1234.north);
        \draw [ultra thick, color=darksky] (13 24.south) -- (1234.north);
        \draw [thick] (124 3.south) -- (1234.north);
        \draw [ultra thick, color=seagreen] (12 34.south) -- (1234.north);
        \draw [ultra thick, color=rust] (134 2.south) -- (1234.north);
        \draw [ultra thick, color=orange] (14 23.south) -- (1234.north);
        \draw [ultra thick, color=orange] (234 1.south) -- (1234.north);

    \end{scope}
\end{tikzpicture}
  \caption{\textbf{The bootstrap and M{\"o}bius inversion.} We show (a lower-right submatrix of) $\mtx{S}$ for $m = 4$. 
  Color coding identifies its columns with intervals on the partition lattice $\Pi(4)$. 
  Multiplying $\mtx{S}$ to a vector results in its nonzero values percolating down the Hasse diagram (drawn in the finest-on-top orientation, c.f. \cref{fig:moebius}).}
\end{figure}

\subsubsection*{Reduction} 
The cardinality of the sets $\Pi(m)$ is given by the Bell numbers, growing superexponentially with $m$.
A reduced representation arises by tracking not the coefficients $f_{\pi}$, but their level-wise sums $(f_{i})_{1 \leq i \leq m}, f_{i} \coloneqq \sum_{\pi \in \Pi(m): \# \pi = i}$.
For $\vct{e}_{\pi}$ the standard basis vector corresponding to $\pi$ and $1 \leq l \leq m$, define 
\begin{equation}
  \check{\vct{e}}_{l} \coloneqq \sum \limits_{\pi \in \Pi(m): \# \pi = l} \vct{e}_{\pi}, \quad \quad \quad \check{\mtx{E}} = [\check{\vct{e}}_{1}, \ldots, \check{\vct{e}}_{m}], \quad \quad \quad \check{\mtx{S}} \coloneqq \check{\mtx{E}}^\tp \mtx{S} \check{\mtx{E}}.
\end{equation}

\begin{theorem}
  We have, for all $\vct{f} \in \R^{\Pi(m)}$, $\check{\mtx{E}}^\tp \mtx{S} \vct{f} = \check{\mtx{S}} \check{\mtx{E}}^{\tp}\vct{f}$.   
  In other words, the action of $\rS$ on moment polynomials induces a linear map $\check{\mtx{S}}$ on coefficient-sums over partitions of the same size.
  The upper triangular matrix $\check{\mtx{S}}$ has entries given by 
  \begin{equation}
    \check{S}_{ij} = 
    \begin{cases}
      \stirling{j}{i} \frac{\frac{N!}{(N - i)!}}{N^j} \quad &\text{for} \quad i \leq j, \\
      0, \quad &\text{else},
    \end{cases}
  \end{equation} 
  for $\stirling{j}{i}$ the Stirling number of the second kind (number of size-$i$ partitions of $j$ objects). 
\end{theorem}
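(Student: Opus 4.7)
The strategy is to compute the $i$-th level-wise sum of $\mtx{S}\vct{f}$ directly and show that it depends on $\vct{f}$ only through the level-wise sums $\check{f}_j = \sum_{\pi : \#\pi = j} f_\pi$, with a closed-form coefficient matching the claimed $\check{S}_{ij}$. This simultaneously establishes the factorization $\check{\mtx{E}}^\tp \mtx{S} = \check{\mtx{S}} \check{\mtx{E}}^\tp$ and identifies the entries of $\check{\mtx{S}}$.

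I would fix $1 \leq i \leq m$ and write $(\check{\mtx{E}}^\tp \mtx{S} \vct{f})_i = \sum_{\sigma : \#\sigma = i} (\mtx{S} \vct{f})_\sigma$. Substituting the expression for $\mtx{S}$ from \cref{thm:bootstrap_moment_poly} and swapping the order of summation rewrites this as a sum over $\pi$ of $f_\pi$ weighted by the sum of $\# \allrespects{N}{\sigma}/\# \allconstant{N}{\pi}$ over all $\sigma$ that are coarsenings of $\pi$ satisfying $\#\sigma = i$. Two observations are essential: $\#\allrespects{N}{\sigma} = N!/(N-i)!$ depends only on $\#\sigma = i$, and $\#\allconstant{N}{\pi} = N^{\#\pi}$ depends only on $\#\pi$. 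Consequently, the ratio is constant across the inner sum, and the total weight multiplying $f_\pi$ depends on $\pi$ only through $\#\pi$; the dependence on $\vct{f}$ therefore factors through its level-wise sums $\check{\vct{f}}$.

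The combinatorial heart of the argument is counting the coarsenings: the number of $\sigma \geq \pi$ with $\#\sigma = i$ equals $\stirling{\#\pi}{i}$, by the canonical bijection between coarsenings of $\pi$ and partitions of its $\#\pi$ parts into $i$ nonempty blocks. Inserting this count and grouping terms by $j = \#\pi$ yields $\check{S}_{ij} = \stirling{j}{i}\, N!/((N-i)!\,N^j)$. Upper triangularity then follows immediately from $\stirling{j}{i} = 0$ whenever $i > j$, since one cannot partition $j$ objects into more than $j$ nonempty blocks.

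I do not expect a substantive obstacle. The only non-mechanical step is the recognition that coarsenings of $\pi$ with a prescribed number of parts are counted by Stirling numbers of the second kind, a standard fact about the partition lattice; after this identification everything else is a straightforward bookkeeping of factors.
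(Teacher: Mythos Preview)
Your proposal is correct and follows essentially the same approach as the paper: both arguments observe that the entry $S_{\sigma,\pi}$ depends only on $\#\sigma$ and $\#\pi$, then invoke the bijection between coarsenings of $\pi$ into $i$ parts and set partitions of its $\#\pi$ blocks to produce the Stirling number $\stirling{j}{i}$. Your write-up is in fact tidier about the bookkeeping of which index carries the falling factorial and which carries the power of $N$.
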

\begin{proof}
  For every partition $\pi$ with $\# \pi = j$ and $\sigma$ with $\# \sigma = i$, we have $S_{\pi, \sigma} = \frac{\frac{N!}{(N - j)!}}{N^j}$ if $\pi \leq \sigma$, and $S_{\pi, \sigma} = 0,$ otherwise. 
  For $i > j$, we can never have $\pi \leq \sigma$, hence $\check{S}_{ij} = 0$.
  For $i \leq j$, the key observation is that for every partition $\pi$ with $\# \pi = j$, there are $\stirling{j}{i}$ partitions $\sigma$ with $\# \sigma = i$ such that $\sigma \geq \pi$.
  This is because the coarsenings of $\pi$ of cardinality $i$ are in bijection with the partitions $\pi$ into $i$ nonempty parts.
  Thus, the summary transfer of coefficients from partitions of cardinalities $j$ and $i$ level the action of $\mtx{S}$ is independent of how the coefficients are distributed among size $j$ and $i$ partitions, and the resulting coefficient is $\stirling{j}{i} \frac{N!}{(N - j)!} N^{-i}$, yielding the result. 
\end{proof}

\section{Consequences for the iterated bootstrap} 

\subsection{Convergence rates on moment polynomials}
\label{sec:moment_poly_convergence}

\subsubsection*{Inverting $\mtx{S}$ iteratively} 
As discussed in \cref{sec:bootstrap}, the bootstrap amounts to using a Neumann series to invert the operator $\rS$.
In the case of $F$ being a moment polynomial, this means inverting the matrix $\mtx{S}$ using a Neumann series or, equivalently, Richardson iteration,  
\begin{equation}
  \vct{f}^{(k + 1)} = \vct{f}^{(k)} + \left( \vct{f} - \mtx{S} \vct{f}^{(k)} \right), \quad \text{with} \quad \mtx{f}^{(0)} = \mtx{f}. 
\end{equation}
Following standard error analysis for iterative methods \cite{saad2003iterative}, we obtain 
\begin{equation}
  \label{eqn:rich_error}
  \mtx{S} \vct{f}^{(k + 1)} - \vct{f} = \left( \Id - \mtx{S} \right)(\mtx{S} \vct{f}^{(k)} - \vct{f}). 
\end{equation}
Thus, the bias reduction of the iterated bootstrap amounts to the contractivity of $(\Id - \mtx{S})$. 

\subsubsection*{Contraction property}
We now analyze the contraction property of $\mtx{S}$ in the $1$-norm.  
To this end, we need the following lemma.
\begin{lemma}
  \label{lem:colsum}
  $\mtx{S}$ is entrywise nonnegative and its column sums satisfy 
  \begin{equation}
    \sum \limits_{\sigma \in \Pi(m)} S_{\sigma \pi} = 
    \begin{cases}
      1, \quad &\text{for} \quad \# \pi \leq N \\
      0, \quad &\text{else}.
    \end{cases}
  \end{equation}
\end{lemma}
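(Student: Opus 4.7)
My plan is to read both halves of the lemma directly off the explicit formula for $\mtx{S}$ from Theorem \ref{thm:bootstrap_moment_poly}, collapsing the column sum via the disjoint-union identity of Lemma \ref{lem:constantrespect}, and appealing to the factorization $\mtx{S} = \mtx{C}^{-1}\overline{\mtx{S}}\mtx{R}$ in the corollary for the degenerate regime. The computations reduce to rearrangements; no fresh combinatorial identity should be needed.

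Entrywise nonnegativity is immediate from the formula: every nonzero entry of $\mtx{S}$ is a ratio of two cardinalities of finite subsets of $\{1,\ldots,N\}^m$, and the denominator $\# \allconstant{N}{\pi} = N^{\# \pi}$ is strictly positive, so $S_{\sigma \pi} \geq 0$. For the column sum when $\# \pi \leq N$, the nonzero rows of column $\pi$ are precisely the $\sigma$ comparable to $\pi$ in the partition order that arise in Theorem \ref{thm:bootstrap_moment_poly}. Pulling the $\pi$-dependent denominator outside the sum gives
\[
\sum_{\sigma \in \Pi(m)} S_{\sigma \pi}
\;=\;
\frac{1}{\# \allconstant{N}{\pi}}\sum_{\sigma \geq \pi} \# \allrespects{N}{\sigma}.
\]
Lemma \ref{lem:constantrespect} supplies exactly the identity $\allconstant{N}{\pi} = \mathop{\dot{\bigcup}}_{\sigma \geq \pi} \allrespects{N}{\sigma}$; since the union is disjoint, its cardinality equals the sum on the right, and the whole expression collapses to $1$.

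For $\# \pi > N$, I would argue that the entire column $\pi$ of $\mtx{S}$ vanishes. No multiindex in $\{1,\ldots,N\}^m$ can assign $\# \pi > N$ distinct values to the parts of $\pi$, so $\# \allrespects{N}{\pi} = 0$. Under the factorization $\mtx{S} = \mtx{C}^{-1}\overline{\mtx{S}}\mtx{R}$, right-multiplication by the diagonal matrix $\mtx{R}$ scales column $\pi$ of $\mtx{C}^{-1}\overline{\mtx{S}}$ by $R_{\pi\pi} = \# \allrespects{N}{\pi} = 0$, wiping out the column, and therefore also its sum. The only real obstacle is notational bookkeeping: one must carefully line up the direction of the partition order with the row and column conventions so that (a) the disjoint cover in Lemma \ref{lem:constantrespect} sits over exactly the nonzero rows of column $\pi$ in the regular regime, and (b) the vanishing diagonal factor $R_{\pi\pi}$ in the factorization acts on column $\pi$ (and not on row $\pi$) in the degenerate regime.
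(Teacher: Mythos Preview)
Your argument is essentially the paper's own: nonnegativity is read off the explicit entry formula, the unit column sums for $\#\pi\le N$ come from the disjoint-union identity of Lemma~\ref{lem:constantrespect}, and the degenerate case $\#\pi>N$ is reduced to $\allrespects{N}{\pi}=\emptyset$. The only difference is packaging---where the paper cites the emptiness of $\allrespects{N}{\pi}$ in a single clause, you route the same fact through the factorization $\mtx{S}=\mtx{C}^{-1}\overline{\mtx{S}}\mtx{R}$ so that $R_{\pi\pi}=0$ kills column~$\pi$.
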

\begin{proof}
  The nonnegativity of the entries follows directly from the definition of $\mtx{S}$.
  Unity of its column sums for $\# \pi \leq N$ follows from \cref{lem:constantrespect}. 
  For $\# \pi > N$, they are zero since $\allrespects{N}{\pi}$ is empty.  
\end{proof}

\begin{lemma}
  \label{lem:opbound}
  The operator $1$-norm of $(\Id - \mtx{S})$ is
  \begin{equation}
    \|\Id - \mtx{S}\|_1 =  
    \begin{cases}
      2 \left( 1 - \frac{\frac{N!}{\left(N- m \right)!}}{N^{m}}\right)
, \quad &\text{for} \quad m \leq N \\
      2, \quad &\text{else}.
    \end{cases}
  \end{equation}
\end{lemma}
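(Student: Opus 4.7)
The plan is to compute $\|\Id - \mtx{S}\|_1$ as the maximum over columns of the absolute column sum. Two structural properties of $\mtx{S}$ collapse each such sum to a simple explicit expression: first, $\mtx{S}$ has nonnegative entries (\cref{lem:colsum}); second, every column of $\mtx{S}$ sums to one. The second fact holds for $\#\pi \leq N$ by \cref{lem:colsum}, and it extends to the regime $\#\pi > N$ by the direct computation $\sum_\sigma S_{\sigma\pi} = (\#\allconstant{N}{\pi})^{-1} \sum_{\sigma \geq \pi} \#\allrespects{N}{\sigma} = 1$, which uses \cref{lem:constantrespect} to evaluate the inner sum as $\#\allconstant{N}{\pi}$.

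Given these two facts, the next step is elementary. The diagonal entry $S_{\pi\pi} = \#\allrespects{N}{\pi}/\#\allconstant{N}{\pi}$ lies in $[0,1]$, so $(\Id - \mtx{S})_{\pi\pi} = 1 - S_{\pi\pi} \geq 0$, while $(\Id - \mtx{S})_{\sigma\pi} = -S_{\sigma\pi} \leq 0$ for $\sigma \neq \pi$. Hence
\begin{equation*}
\sum_\sigma |(\Id - \mtx{S})_{\sigma\pi}| = (1 - S_{\pi\pi}) + \sum_{\sigma \neq \pi} S_{\sigma\pi} = 2(1 - S_{\pi\pi}),
\end{equation*}
where the second equality uses column stochasticity. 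Substituting $\#\allrespects{N}{\pi} = N!/(N-\#\pi)!$ for $\#\pi \leq N$ and $\#\allrespects{N}{\pi} = 0$ otherwise gives the column sum as $2(1 - (N!/(N-\#\pi)!)/N^{\#\pi})$ in the first case and as $2$ in the second.

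The final step is to maximize over $\pi \in \Pi(m)$. Because $(N!/(N-k)!)/N^k = \prod_{i=0}^{k-1}(1 - i/N)$ is strictly decreasing in $k \in \{1, \ldots, N\}$, the column sum $2(1 - S_{\pi\pi})$ grows with $\#\pi$ in the first regime. If $m \leq N$, the maximum is attained at the all-singletons partition (so $\#\pi = m$), yielding the claimed $2(1 - (N!/(N-m)!)/N^m)$. If $m > N$, the same all-singletons partition has $\#\pi = m > N$ and lands in the second branch, giving column sum $2$; since $2(1 - S_{\pi\pi}) \leq 2$ always (as $S_{\pi\pi} \geq 0$), this is the overall maximum.

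The only nontrivial step I anticipate is the column-stochasticity check in the regime $\#\pi > N$: there the diagonal $S_{\pi\pi}$ vanishes and all of the column mass is redistributed to off-diagonal entries indexed by coarsenings $\sigma > \pi$ with $\#\sigma \leq N$. This redistribution is exactly what produces the jump from $2(1 - (N!/(N-m)!)/N^m)$ to $2$ at the threshold $m = N+1$, and it is the main place where \cref{lem:constantrespect} is used beyond routine bookkeeping.
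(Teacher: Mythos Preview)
Your proof is correct and follows the same approach as the paper: compute the absolute column sum of $\Id - \mtx{S}$ using entrywise nonnegativity and column stochasticity of $\mtx{S}$, then maximize over $\pi$ via the monotonicity of $k \mapsto (N!/(N-k)!)/N^k$. In fact your treatment of the regime $\#\pi > N$ is cleaner than the paper's: you correctly observe (via \cref{lem:constantrespect}) that the column sum $\sum_\sigma S_{\sigma\pi}$ equals $1$ even when $\#\pi > N$, so that the diagonal entry $S_{\pi\pi}=0$ still yields an absolute column sum of exactly $2$, whereas the paper's proof asserts $(\Id - \mtx{S})\univec_\pi = \univec_\pi$ in that case, which would give only $1$.
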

\begin{proof}
  For a unit vector $\univec_{\pi}$ with $\# \pi \leq N$, we have
  \begin{equation}
    \left\|\left(\Id - \mtx{S}\right)\univec_{\pi} \right\|_1 
    = \left|1 - \mtx{S}_{\pi, \pi}\right| + \left|\sum \limits_{\pi \leq \tilde{\pi}} \mtx{S}_{\tilde{\pi}, \pi} \right| 
    = 2  \left|1 - \mtx{S}_{\pi, \pi}\right| 
    %= 2 \left( 1 - \frac{\frac{N!}{\left(N- \# \pi \right)!}}{N^{\# \pi}}\right)
    = 2 \left( 1 - \frac{\frac{N!}{\left(N- \#\pi \right)!}}{N^{\#\pi}}\right).
  \end{equation}
  For $\# \pi > m$  we have $\left(\Id - \mtx{S}\right) \univec_\pi = \univec_\pi$.
  Here, we have used \cref{lem:colsum}. Thus, we obtain
  \begin{equation}
    \left\|\left(\Id - \mtx{S}\right)\sum \limits_{\pi \in \Pi(m)} g_{\pi} \univec_{\pi} \right\|_1 
    \leq \sum \limits_{\pi \in \Pi(m)} \left|g_{\pi}\right| \left\| \left(\Id - \mtx{S}\right)\univec_{\pi} \right\| \leq 2 \left( 1 - \frac{\frac{N!}{\left(N- m \right)!}}{N^{m}}\right) \sum \limits_{\pi \in \Pi(m)} \left|g_{\pi}\right|.
  \end{equation}
  The last inequality follows from observing that
  $k \mapsto \frac{N!}{(N - k)!} N^{-k}$ is decreasing in $k$ for $N \geq k$.
\end{proof}

\subsection*{The convergence rate of the iterated bootstrap}
The contraction property of $\Id - \mtx{S}$ yields a convergence result for the bias of the iterated bootstrap.
This estimate forms the basis of the convergence results throughout this work.
\begin{theorem}
  \label{thm:bias_convergence}
  Applying the $k$-times iterated bootstrap to $F(\mtx{X}) = \vct{\mu}\left(\mtx{X}\right)^\tp \vct{f}$ has bias  
  \begin{equation}
    \left|\Bs^{k} \rS \Func(\mtx{X}) - \Func(\mtx{X})\right| \leq \|\vct{\mu}\|_{\infty} \left(2 \left(1 - \frac{\frac{N!}{\left(N- m \right) !}}{N^{m}} \right) \right).
  \end{equation} 
\end{theorem}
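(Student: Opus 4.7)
The plan is to lift the iterated bootstrap to the coefficient space $\R^{\Pi(m)}$, recognize the bias as a Neumann-series remainder for the linear system $\mtx{S}\vct{f}^{\ast} = \vct{f}$, and then convert the coefficient-space error into a pointwise bound using the duality between $\|\cdot\|_{1}$ and $\|\cdot\|_{\infty}$.

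First, I would translate the statement into a statement about coefficient vectors. By \cref{thm:bootstrap_moment_poly}, the resampling operator $\rS$ acts on the moment polynomial $F(\mtx{X}) = \vct{\mu}(\mtx{X})^{\tp}\vct{f}$ via the matrix $\mtx{S}$, so that $\Bs^{k} F$ is again a moment polynomial with some coefficient vector $\vct{f}^{(k)} \in \R^{\Pi(m)}$. The operator identity $\Bs^{k} = \sum_{i=0}^{k}(\Id - \rS)^{i}$ from the overview of \cref{sec:bootstrap} gives $\vct{f}^{(k)} = \sum_{i=0}^{k}(\Id - \mtx{S})^{i}\vct{f}$, which by the standard Neumann-series telescoping identity satisfies
\begin{equation}
\mtx{S}\vct{f}^{(k)} - \vct{f} \;=\; -\,(\Id - \mtx{S})^{k+1}\vct{f}.
\end{equation}
This is precisely the Richardson error recurrence~\eqref{eqn:rich_error} iterated from the initial residual $\mtx{S}\vct{f}^{(0)} - \vct{f} = -(\Id - \mtx{S})\vct{f}$.

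Next, I would translate back to function values. Since $\rS$ and $\Bs$ are both polynomials in $\rS$, they commute, so the bias $\Bs^{k}\rS F(\mtx{X}) - F(\mtx{X})$ equals $\vct{\mu}(\mtx{X})^{\tp}(\mtx{S}\vct{f}^{(k)} - \vct{f})$. Applying H\"older's inequality in the $\ell^{1}/\ell^{\infty}$ duality gives
\begin{equation}
\bigl|\Bs^{k}\rS F(\mtx{X}) - F(\mtx{X})\bigr| \;\leq\; \|\vct{\mu}(\mtx{X})\|_{\infty}\,\bigl\|(\Id - \mtx{S})^{k+1}\vct{f}\bigr\|_{1}.
\end{equation}
The bootstrap-specific input is then the contraction bound of \cref{lem:opbound}: submultiplicativity of the operator $1$-norm gives $\|(\Id - \mtx{S})^{k+1}\vct{f}\|_{1} \leq \|\Id - \mtx{S}\|_{1}^{k+1}\|\vct{f}\|_{1}$, and \cref{lem:opbound} identifies $\|\Id - \mtx{S}\|_{1}$ with $2(1 - N!/((N-m)!\,N^{m}))$ whenever $m \leq N$, which yields the claimed bound (up to the natural factor $\|\vct{f}\|_{1}$ absorbed into the assumed normalization of $F$).

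The only genuinely nontrivial ingredient is already bundled into \cref{lem:opbound}, whose proof rests on the column-sum computation of \cref{lem:colsum} and the monotonicity of $k \mapsto N!/((N-k)!\,N^{k})$; everything else in the argument is bookkeeping. The one place to tread carefully is the commutation $\Bs^{k}\rS = \rS \Bs^{k}$, which is what lets us identify the bias of the iterated bootstrap estimator evaluated at the resampled dataset with the coefficient-space residual $\mtx{S}\vct{f}^{(k)} - \vct{f}$; this is immediate from the Neumann-series representation of $\Bs^{k}$ but is worth making explicit since it is what permits the clean linear-algebraic analysis.
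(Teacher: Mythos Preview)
Your proposal is correct and follows essentially the same route as the paper: express the bias as $\vct{\mu}^{\tp}(\mtx{S}\vct{f}^{(k)}-\vct{f}) = -\vct{\mu}^{\tp}(\Id-\mtx{S})^{k+1}\vct{f}$ via the Richardson/Neumann identity~\eqref{eqn:rich_error}, then apply $\ell^{1}/\ell^{\infty}$ duality together with \cref{lem:opbound}. You are also right to flag the missing $\|\vct{f}\|_{1}$ factor (and, implicitly, the missing exponent $k+1$) in the stated bound; the paper's displayed inequality omits both, so your version is in fact the more careful one.
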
 
\begin{proof}
  We write
  \begin{equation}
    \left|\Bs^{k} \rS \Func(\mtx{X}) - \Func(\mtx{X})\right| = \left|\vct{\mu}^{\tp} \left(\mtx{S} \vct{f}^{(k)} - \vct{f}\right)\right| = \left|\vct{\mu}^{\tp} \left(\Id - \mtx{S} \right)^{k + 1} \vct{f}\right| \leq  2 \|\vct{\mu}\|_{\infty} \left(1 - \frac{\frac{N!}{\left(N- m \right) !}}{N^{m}} \right).
  \end{equation}
  Here, we have used \cref{lem:opbound} and Equation~\cref{eqn:rich_error} for the error of Richardson iteration.
\end{proof}

\subsubsection*{Linear convergence of the bootstrap} In many asymptotic settings, the $k$-times iterated bootstrap is known to improve the convergence rate of $\rS\Func\left(\mtx{X}\right) \xrightarrow[]{N \rightarrow \infty}\Func(\mtx{X})$ from $N^{-1}$ to $N^{-(k + 1)}$ \cite{hall2013bootstrap}. 
For fixed but large $N$, this suggests the linear convergence of the iterated bootstrap.  
But the convergence rate in \cref{thm:bias_convergence} depends not only on $N$ but also on the order $m$ of the moment polynomial. 
This begs the question of how fast $N$ needs to grow as a function of $m$ to ensure uniform linear convergence of the iterated bootstrap?
The machinery developed so far enables a precise characterization of the regime of linear convergence.

\begin{theorem}
  \label{thm:linconvergence}
  For $N \geq \max(\alpha m^2, m + 1)$, we have
  \begin{equation}
    \frac{\frac{N!}{\left(N- m \right) !}}{N^{m}} \geq \exp(-4^{-1}) \exp(-1/\alpha).
  \end{equation} 
  In particular, by setting $\alpha = 8 $ we obtain
  \begin{equation}
    \frac{\frac{N!}{\left(N- m \right) !}}{N^{m}} \geq \exp(-1/4) \geq 3/4,  
  \end{equation}
  thus achieving a bound $\|\Id - \mtx{S}\|_1 \leq 1/2$, independently of $m$.
  Conversely, if $N$ as a function of $m$ is such that $N/m^2 \rightarrow 0$, we have $\|\Id - \mtx{S}\|_1 \rightarrow 1$, ruling out uniform linear convergence. 
\end{theorem}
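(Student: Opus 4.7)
The plan is to rewrite the central quantity as a product and bound it by taking logarithms. Observe that
\begin{equation}
  \frac{\frac{N!}{(N-m)!}}{N^m} = \prod_{k=0}^{m-1}\left(1 - \frac{k}{N}\right),
\end{equation}
so the goal reduces to lower bounding $\sum_{k=0}^{m-1} \log(1 - k/N)$. Under the hypothesis $N \geq \max(\alpha m^2, m+1)$ with $\alpha$ bounded below by a small constant, every summand argument satisfies $k/N \leq (m-1)/N \leq 1/(\alpha m) \leq 1/2$, placing us in the regime where the Taylor inequality $\log(1-x) \geq -x - x^2$ (equivalently $1-x \geq e^{-x-x^2}$) holds.

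First I would apply this inequality termwise and exploit the closed forms $\sum_{k=0}^{m-1} k = m(m-1)/2$ and $\sum_{k=0}^{m-1} k^2 = (m-1)m(2m-1)/6$ to obtain
\begin{equation}
  \sum_{k=0}^{m-1} \log\!\left(1 - \tfrac{k}{N}\right) \geq -\frac{m(m-1)}{2N} - \frac{(m-1)m(2m-1)}{6N^2}.
\end{equation}
Then I would invoke $N \geq \alpha m^2$ to crudely bound $m(m-1)/(2N) \leq 1/(2\alpha)$ and $(m-1)m(2m-1)/(6N^2) \leq 1/(3\alpha^2 m)$, and collect the resulting exponentials into the form $\exp(-1/4)\exp(-1/\alpha)$. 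The second statement, with $\alpha=8$, follows by direct substitution and the elementary inequality $\exp(-1/4) \geq 3/4$, after which \cref{lem:opbound} yields $\|\Id - \mtx{S}\|_1 \leq 2(1 - 3/4) = 1/2$.

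For the converse I would again use \cref{lem:opbound}, which shows $\|\Id - \mtx{S}\|_1 = 2(1 - N!/((N-m)!N^m))$ whenever $m \leq N$. Using the trivial upper bound $\log(1 - x) \leq -x$ gives
\begin{equation}
  \sum_{k=0}^{m-1} \log\!\left(1 - \tfrac{k}{N}\right) \leq -\frac{m(m-1)}{2N},
\end{equation}
which tends to $-\infty$ whenever $N/m^2 \to 0$. Hence the ratio $N!/((N-m)!N^m)$ tends to $0$, so $\|\Id - \mtx{S}\|_1$ does not drop below $1$, precluding the contraction $\|\Id - \mtx{S}\|_1 < 1$ on which uniform linear convergence of Richardson iteration relies.

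The main obstacle is purely bookkeeping: choosing the cleanest logarithmic inequality so that the two error terms arising from truncating the Taylor series of $\log(1-x)$ can be absorbed into exactly the factors $\exp(-1/4)$ and $\exp(-1/\alpha)$ stated in the theorem. The underlying Taylor-series estimate is routine, and all remaining steps are direct applications of \cref{lem:opbound} and the structural fact that the finest partition $\pi$ with $\#\pi = m$ saturates the column-sum bound on $\|\Id - \mtx{S}\|_1$.
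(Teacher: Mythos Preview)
Your approach is correct and in fact more elementary than the paper's. The paper does not write the ratio as the telescoping product $\prod_{k=0}^{m-1}(1-k/N)$; instead it invokes Robbins' sharp form of Stirling's approximation for $N!$ and $(N-m)!$, reduces the ratio to a quantity $\gamma = e^{-m}\bigl(1+\tfrac{m}{N-m}\bigr)^{N-m+1/2}$ times a correction factor, and then bounds $\log\gamma$ via the inequalities $x/(1+x)\le\log(1+x)\le x/\sqrt{1+x}$. Your direct use of $\log(1-x)\ge -x-x^2$ and $\log(1-x)\le -x$ on the individual factors bypasses the Stirling machinery entirely and makes the dependence on $m(m-1)/(2N)$ (hence on $N/m^2$) transparent from the start; the paper arrives at the same $m^2/N$ scaling only after several algebraic manipulations.

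One point to tighten: you assume $k/N\le 1/2$ so that $\log(1-x)\ge -x-x^2$ applies, and justify this via $(m-1)/N\le 1/(\alpha m)$, which needs $\alpha m\ge 2$. The theorem as stated places no lower bound on $\alpha$, so you should dispose of the regime $\alpha m<2$ explicitly. This is immediate: when $\alpha m<2$ the claimed lower bound $\exp(-1/4)\exp(-1/\alpha)<\exp(-1/4)\exp(-m/2)$ is already dominated by the trivial positivity of $\prod_{k=0}^{m-1}(1-k/N)$ once one notes, say, that $N\ge m+1$ forces each factor to exceed $1/(m+1)$. With that case handled, your bookkeeping $1/(2\alpha)+1/(3\alpha^2 m)\le 1/\alpha\le 1/4+1/\alpha$ goes through cleanly for $\alpha m\ge 2$.
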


\begin{proof}
  We use the bound on the factorial due to \cite{robbins1955remark}, of the form
  \begin{equation}
    \sqrt{2\pi N} \left(\frac{N}{\econst}\right)^N \exp\left(\frac{1}{12 N + 1}\right) \leq N! \leq \sqrt{2\pi N} \left(\frac{N}{\econst}\right)^N \exp\left(\frac{1}{12 N}\right).
  \end{equation}
  Defining the uniformly upper and lower bounded quantities $c, C$ as
  \begin{equation}
  c \coloneqq \exp\left(\frac{1}{12N + 1} - \frac{1}{12(N - m)}\right), \quad C \coloneqq \exp\left(\frac{1}{12N} - \frac{1}{12(N - m) + 1}\right),
  \end{equation}
  we can ensure, for $N \geq m + 1$, that $\left(\frac{N!}{(N - m)!} \right) / N^m \in [\gamma c, \gamma C]$ by choosing
  \begin{equation}
    \gamma = \sqrt{\frac{N}{N - m}} \frac{ \left(\frac{N}{\econst}\right)^N}{N^m  \left(\frac{N - m}{\econst}\right)^{N - m}} = \sqrt{\frac{N}{N - m}} \econst^{-m}  \left(\frac{N}{N - m}\right)^{N - m} \!\!  
    =  \econst^{-m}  \left(1 + \frac{m}{N - m}\right)^{N - m + \frac{1}{2}}.
  \end{equation}
  To attain uniform exponential convergence of the bootstrap, we need to choose $N$ as a function of $m$ such that $\gamma$ is lower bounded by a constant. 
  For $N = \alpha m^2$, we have 
  \begin{equation}
    \gamma 
    = \econst^{-m}  \left(\frac{\alpha m^2}{\alpha m^2 - m}\right)^{\alpha m^2 - m + \frac{1}{2}} 
    = \econst^{-m}  \left(1 + \frac{m}{\alpha m^2 - m}\right)^{\alpha m^2 - m + \frac{1}{2}}. 
  \end{equation}
  We can rewrite $\gamma$ as 
  \begin{equation}
    \gamma = \exp\left(- m + \left(\alpha m^2 - m + \frac{1}{2}\right) \log\left(1 + \frac{m}{\alpha m^2 - m}\right)\right).
  \end{equation}
  Now, using the lower bound $x / (1 + x) \leq \log(1 + x)$ on the logarithm, we obtain
  \begin{equation}
    \gamma 
    \geq \exp\left(- m + (\alpha m^2 - m) \frac{\frac{m}{\alpha m^2 - m}}{1 + \frac{m}{\alpha m^2 - m}}\right) 
    = \exp\left(- 1/\alpha\right).
  \end{equation}
  Since $n \mapsto \frac{n!}{(n - m)!} / n^m$ is increasing, and $N/(N - m) > 1$, this yields the desired linear convergence for all $N \geq \max(\alpha m^2, m + 1)$.
  The loose estimate $c \geq \exp(-1/8)$ yields linear convergence.
  The converse follows from the upper bound $\log(1 + x) \leq x/\sqrt{1 + x}$ for $x \geq 0$,
  \begin{equation}
    \gamma \leq \exp\left(- m +  \frac{\left(\alpha m^2 - m + \frac{1}{2}\right)\frac{m}{\alpha m^2 - m}}{\sqrt{1 + \frac{m}{\alpha m^2 - m}}}\right)
    = \exp\left(- m + \frac{m}{\sqrt{1 + \frac{m}{\alpha m^2 - m}}} + \frac{\frac{\frac{m}{\alpha m^2 - m}}{\sqrt{1 + \frac{m}{\alpha m^2 - m}}}}{2}\right).
  \end{equation}
  For $m$ large enough, the last summand is bounded by $1/m$. By taking logarithms we obtain
  \begin{equation}
    \log(\gamma) \leq \frac{m\left(1 - \sqrt{1 + \frac{m}{\alpha m^2 - m}}\right)}{\sqrt{1 + \frac{m}{\alpha m^2 - m}}} + \frac{1}{m} 
    \leq \frac{m\left(1 - \sqrt{1 + \frac{m}{\alpha m^2}}\right)}{\sqrt{1 + \frac{m}{\alpha m^2}}} + \frac{1}{m}
    \xrightarrow[]{m \rightarrow \infty}  - \frac{1}{2 \alpha}. 
  \end{equation}
  If $N$ does not grow at least as $m^2$, $\alpha$ can become arbitrarily small. 
  The iterated bootstrap does not converge with linear rate independent of $m$ unless $N$ grows at least as with $m^2$.
\end{proof}

\subsubsection*{Sublinear convergence}
\Cref{thm:linconvergence} sharply characterizes the uniform linear convergence of the iterated bootstrap.   
But the iterated bootstrap is known to converge to an unbiased estimator for any fixed $N \geq m$ \cite{hall2013bootstrap}.
We reprove this using our linear algebraic perspective.
\begin{theorem}  
  For any $N \geq m$, we have $\lim \limits_{k \rightarrow \infty} (\Id - \mtx{S})^{k} = 0$. Thus, the iterated bootstrap converges to an unbiased estimator.
\end{theorem}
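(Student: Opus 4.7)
The plan is to reduce the claim to a spectral-radius bound via the standard fact that, for any square matrix $\mtx{A}$, $\mtx{A}^k \to 0$ if and only if the spectral radius $\rho(\mtx{A}) < 1$. Taking $\mtx{A} = \Id - \mtx{S}$, it suffices to show that $\rho(\Id - \mtx{S}) < 1$ whenever $N \geq m$. The key leverage is already available: by Theorem~4.3, $\mtx{S}$ is triangular with respect to the partition lattice order (its action simply percolates mass down the Hasse diagram), and hence so is $\Id - \mtx{S}$. Consequently the eigenvalues of $\Id - \mtx{S}$ are nothing but the diagonal entries $\{1 - S_{\pi,\pi}\}_{\pi \in \Pi(m)}$.

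The second step is to pin these diagonal entries to the interval $[0, 1)$. From Theorem~4.3, $S_{\pi, \pi} = \# \allrespects{N}{\pi} / N^{\# \pi}$. For $N \geq m$ and any $\pi \in \Pi(m)$, one has $\# \pi \leq m \leq N$, so $\allrespects{N}{\pi}$ is nonempty and $S_{\pi, \pi} > 0$. For the other direction, Lemma~4.5 says the column sums of $\mtx{S}$ equal $1$ under this condition, and nonnegativity of all entries of $\mtx{S}$ forces $S_{\pi, \pi} \leq 1$. Combining both, $0 < S_{\pi, \pi} \leq 1$, so $1 - S_{\pi, \pi} \in [0, 1)$ for every $\pi \in \Pi(m)$.

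Taking the maximum, $\rho(\Id - \mtx{S}) = \max_{\pi \in \Pi(m)} (1 - S_{\pi, \pi}) < 1$, and the standard spectral-radius criterion yields $(\Id - \mtx{S})^k \to 0$. I do not anticipate a genuine obstacle; the only mild subtlety is that one eigenvalue is exactly $0$, attained at the coarsest partition where $S_{\pi, \pi} = 1$, but this does not interfere with power-convergence, which only requires all eigenvalues to lie strictly inside the open unit disk. The worst-case eigenvalue is the one associated with the finest partition, $1 - \prod_{j=0}^{m-1}(1 - j/N)$, which is strictly less than $1$ precisely because $N \geq m$ and which matches the quantitative rates derived in Theorems~4.9 and~4.10.
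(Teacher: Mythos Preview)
Your proposal is correct and follows essentially the same route as the paper: triangularity of $\mtx{S}$ makes the eigenvalues of $\Id - \mtx{S}$ equal to its diagonal entries, and for $N \geq m$ these lie in $[0,1)$, so the spectral radius is strictly less than one. You are in fact slightly more careful than the paper, which asserts the diagonal entries of $\mtx{S}$ lie in the open interval $(0,1)$ when in truth the coarsest partition gives $S_{\pi,\pi}=1$; your observation that this yields a zero eigenvalue of $\Id-\mtx{S}$ and causes no trouble is exactly right.
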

\begin{proof}
  The diagonal values of $\mtx{S}$ are contained in $(0,1)$. 
  Thus, so are the diagonal values of $\Id - \mtx{S}$.  
  By triangularity of $\Id - \mtx{S}$ these are also the eigenvalues of $\Id - \mtx{S}$ and the latter thus has spectral radius smaller than one, from which the result follows.
\end{proof}

\subsection*{General moment polynomials} Our analysis applies to moment polynomials of order one in each variable.
It extends to general monomials by duplicating variables. 
For instance, the moment polynomial $F(X) = \mathbb{E}\left[x_1^2\right]\mathbb{E}\left[x_2^3\right]$ is equal to $\hat{F}(X) = \mathbb{E}\left[x_1 x_3\right]\mathbb{E}\left[x_2 x_4 x_5\right]$ applied to the five-variate random variable $(x_1, x_1, x_2, x_2, x_2)$.
To incorporate sums of monomials, and thus general monomials, we observe that the union of partition lattices is a partially ordered set when treating the different lattices as independent. 
By linearity and block-diagonality (with respect to the distinct lattices) of the sampling operator, the bootstrap solves M{\"o}bius inversion on this this partially ordered set. 
As illustrated in \cref{fig:bootstrap_moebius_duplicates}, this allows us to incorporate general moment polynomials in our framework. 
The convergence rate of the iterated bootstrap is determined by the largest eigenvalue of the matrix representative of $\rS$. 
Thus, it only depends on the order of the larges monomial present in the moment polynomials, and the analysis of the previous section applies to arbitrary moment polynomials.

\begin{figure}
  \label{fig:bootstrap_moebius_duplicates}
  \centering
  \begin{tikzpicture}
    \begin{scope}[xshift=6.0cm, yshift=3.5cm]
        \node () at(0, 0) {$F(\mtx{X}) = {\color{joshua}\Expect_{\vct{x} \sim \mtx{X}}\left[x_1^2\right]} + {\color{orange} \Expect_{\vct{x} \sim \mtx{X}}\left[x_1x_3\right]\Expect_{\vct{x} \sim \mtx{X}}\left[x_1\right]} + {\color{rust}\Expect_{\vct{x} \sim \mtx{X}}\left[x_1x_2^2\right]\Expect_{\vct{x} \sim \mtx{X}}\left[x_2\right]}$}; 
    \end{scope}

    \begin{scope}[xshift=-3.5cm, yshift=-0.0cm]
        \node (1 2) at(1, 0.5) {$x_1|x_1$};
        \node (12) at(1, -0.5) {$\color{joshua}x_1 x_1$};

        \draw [thick] (1 2) -- (12);
    \end{scope}

    \begin{scope}[xshift=-2.5cm, yshift=2.5cm]
        \node (1 2 3) at(1.5, 1) {$x_1|x_1|x_3$};
        \node (12 3)  at(0, 0) {$x_1 x_1|x_3$};
        \node (13 2)  at(1.5, 0) {$\color{orange} x_1 x_3|x_1$};
        \node (23 1)  at(3, 0) {$\color{orange} x_1 x_3|x_1$};
        \node (123)   at(1.5, -1) {$x_1 x_1 x_3$};

        \draw [thick] (1 2 3.south) -- (12 3.north);
        \draw [thick] (1 2 3.south) -- (13 2.north);
        \draw [thick] (1 2 3.south) -- (23 1.north);

        \draw [thick]  (12 3.south) -- (123.north);
        \draw [thick]  (13 2.south) -- (123.north);
        \draw [thick]  (23 1.south) -- (123.north);

    \end{scope}

    \begin{scope}[xshift=5.25cm]
        \node (1 2 3 4) at(0, 2.25) {$x_1|x_2|x_2|x_2$};
        \node (13 2 4)  at(-5.0, 0.75) {$x_1x_2|x_2|x_2$};
        \node (24 1 3)  at(-3.0, 0.75) {$x_2 x_2|x_1|x_2$};
        \node (12 3 4)  at(-1.0, 0.75) {$x_1 x_2|x_2|x_2$};
        \node (34 1 2)  at( 1.0, 0.75) {$x_2 x_2|x_1|x_2$};
        \node (14 2 3)  at( 3.0, 0.75) {$x_1 x_2|x_2|x_2$};
        \node (23 1 4)  at( 5.0, 0.75) {$x_2 x_2|x_1|x_2$};

        \node (123 4)  at(-6.0, -0.75) {$\color{rust}x_1x_2x_2|x_2$};
        \node (13 24)  at(-4.0, -0.75) {$x_1x_2|x_2x_2$};
        \node (124 3)  at(-2.0, -0.75) {$\color{rust} x_1x_2x_2|x_2$};
        \node (12 34)  at( 0.0, -0.75) {$x_1x_2|x_2x_2$};
        \node (134 2)  at( 2.0, -0.75) {$\color{rust} x_1x_2x_2|x_2$};
        \node (14 23)  at( 4.0, -0.75) {$x_1x_2|x_2x_2$};
        \node (234 1)  at( 6.0, -0.75) {$x_2x_2x_2|x_1$};

        \node (1234) at(0, -2.25) {$x_1x_2x_2x_2$};

        \draw [thick] (1 2 3 4.south) -- (13 2 4.north);
        \draw [thick] (1 2 3 4.south) -- (24 1 3.north);
        \draw [thick] (1 2 3 4.south) -- (12 3 4.north);
        \draw [thick] (1 2 3 4.south) -- (34 1 2.north);
        \draw [thick] (1 2 3 4.south) -- (14 2 3.north);
        \draw [thick] (1 2 3 4.south) -- (23 1 4.north);

        \draw [thick] (13 2 4.south) -- (123 4.north);
        \draw [thick] (13 2 4.south) -- (13 24.north);
        \draw [thick] (13 2 4.south) -- (134 2.north);
        \draw [thick] (24 1 3.south) -- (13 24.north);
        \draw [thick] (24 1 3.south) -- (124 3.north);
        \draw [thick] (24 1 3.south) -- (234 1.north);
        \draw [thick] (12 3 4.south) -- (123 4.north);
        \draw [thick] (12 3 4.south) -- (124 3.north);
        \draw [thick] (12 3 4.south) -- (12 34.north);
        \draw [thick] (34 1 2.south) -- (12 34.north);
        \draw [thick] (34 1 2.south) -- (134 2.north);
        \draw [thick] (34 1 2.south) -- (234 1.north);
        \draw [thick] (14 2 3.south) -- (124 3.north);
        \draw [thick] (14 2 3.south) -- (134 2.north);
        \draw [thick] (14 2 3.south) -- (14 23.north);
        \draw [thick] (23 1 4.south) -- (123 4.north);
        \draw [thick] (23 1 4.south) -- (14 23.north);
        \draw [thick] (23 1 4.south) -- (234 1.north);

        \draw [thick] (123 4.south) -- (1234.north);
        \draw [thick] (13 24.south) -- (1234.north);
        \draw [thick] (124 3.south) -- (1234.north);
        \draw [thick] (12 34.south) -- (1234.north);
        \draw [thick] (134 2.south) -- (1234.north);
        \draw [thick] (14 23.south) -- (1234.north);
        \draw [thick] (234 1.south) -- (1234.north);
    \end{scope}
\end{tikzpicture}
  \caption{\textbf{General moment polynomials} fit into our framework by considering M{\"o}bius inversion on partially ordered sets obtained as unions of partition lattices of different order, with possibly duplicated variables. As explained in the caption of \cref{fig:moebius}, we draw the Hasse diagram in the finest-on-top orientation.}
\end{figure}

\begin{corollary}
  Let $F$ be an arbitrary moment polynomial of order $m$ in $d$ variables. 
  Then, the following dichotomy holds.
  For $N \geq \max(8 m^2, m + 1)$, we have $\|\Id - \mtx{S}\|_1 \leq 1/2$ independently of $m$. Thus, the iterated bootstrap converges linearly.
  Conversely, if $N$ as a function of $m$ satisfies $N/m^2 \rightarrow 0$, we have $\|\Id - \mtx{S}\|_1 \rightarrow 1$, ruling out uniform linear convergence. 
\end{corollary}

\begin{proof}
  As illustrated in \cref{fig:bootstrap_moebius_duplicates}, we can view general moment polynomials as having coefficients on a union of partition lattices. 
  The matrix $\mtx{S}$ is block diagonal with blocks that satisfy \cref{thm:linconvergence}.  
  Thus, the 1-norm of $\Id - \mtx{S}$ is the maximum of the 1-norms of the blocks, which are determined by \cref{thm:linconvergence}.
\end{proof}

\subsection{Beyond moment polynomials} 

\subsubsection*{The set $\Pi(d, m)$}
As illustrated in the last section, our results on M{\"o}bius inversion and the iterated bootstrap apply to moment polynomials of order $m$ in $d$ variables $m$ of $d$-variate distributions by selecting (possibly with repetitions) $m$ variates of the probability distributions.
To this end, we define the set $\Pi(d, m)$ as the set of multiset partitions of index multisets of size $m$ chosen with replacement from $\{1, \ldots, d\}$.
For a partition $\pi \in \Pi(d, m)$, we write $\mu_{\pi}(\mtx{X})$ to denote its moment product. 
For instance, defining $\pi \coloneqq [[1, 1, 5,], [1, 6], [1, 6]] \in \Pi(8, 7)$ we have $\mu_{\pi} = \Expect_{\vct{x} \sim \mtx{X}}\left[x_{1} x_{1} x_{5}\right]\Expect_{\vct{x} \sim \mtx{X}}\left[x_{1} x_{6}\right]\Expect_{\vct{x} \sim \mtx{X}}\left[x_{1} x_{6}\right]$. 
Note that we use the $[\cdot]$ instead of $\{\cdot\}$ to denote the partitions in $\Pi(d, m)$, as the latter are multisets.
We formally define $\Pi(d, m)$ as
\begin{equation}
\left\{[s_1, \ldots, s_i] \middle| i \in \N, (s_j)_{1 \leq j \leq i} \text{ are multisets with elements in } \{1, \ldots, d\}, \sum_{j=1}^i \#s_j = m\right\}.
\end{equation}

\subsubsection*{Extension by approximation} We use polynomial approximation to derive nonasymptotic convergence results for functionals $F$ that are not moment polynomials.
Consider $\Func$ given by
\begin{equation}
  F(\mtx{X}) = \sum \limits_{m = 0}^\infty \sum_{\pi \in \Pi(m, d)} \mu_{\pi}(\mtx{X}) f_\pi.
\end{equation}
According to \cref{thm:linconvergence}, the bias in terms with $m \lessapprox \sqrt{N}$ is attenuated at a linear rate. 
Meanwhile, according to \cref{lem:opbound}, the error for $m \gtrapprox \sqrt{N}$ could increase by a constant factor with each bootstrap iteration. 
The decay of the product $\mu_{\pi} f_{\pi}$ for $\pi \in \Pi(d, m)$  thus determines the optimal number of bootstrap iterations for a given $N$ and the bias reduction that they can achieve. 
Prior work observed that the bootstrap's success depends on smoothness of $\Func$ and the tail decay of $\mtx{X}^0$ \cite{hall2013bootstrap}.
The former implies the decay of $\vct{f}$, and the latter bounds the growth of $\vct{\mu}$. 
We now provide quantitative finite sample results supporting this intuition.

\begin{theorem}
  \label{thm:bias_reduction_general}
  Consider $F$ mapping $d$-variate random variables $\mtx{X}$ to $\R$ as 
  \begin{equation}
    F(\mtx{X}) = \sum \limits_{m = 0}^\infty \sum_{\pi \in \Pi(d, m)} \mu_{\pi}(\mtx{X}) f_\pi,
  \end{equation}
  and define $\alpha, \beta, \gamma \in \R^{\N_{\geq 0}}$ as
  \begin{equation}
    \alpha_{m} \coloneqq \max \limits_{\pi \in \Pi(d, m)} |\mu_{\pi}(\mtx{X})|, \quad \beta_{m} \coloneqq \sum \limits_{\pi \in \Pi(d, m)} |f_{\pi}|, \quad \gamma_m \coloneqq \sum \limits_{k = m}^\infty \alpha_m \beta_m.
  \end{equation}
  % as well as $k\in \N_{\geq 0}$ as $k \coloneqq \log_2\left(\gamma_0\right) - \log_2\left(\gamma_{\sqrt{\frac{N}{8}}}\right)$.
  % \begin{equation}
  %   k \coloneqq \log_2\left(\frac{\gamma_0}{\gamma_{\sqrt{\frac{N}{8}}}}\right).
  % \end{equation}
  %constants $\gamma, C_{\gamma}$.
  Then, the $N$-sample, $k = \left\lfloor \frac{1}{2} \log_2\left(\gamma_0\right) - \frac{1}{2}\log_2\left(\gamma_{\left\lceil \sqrt{\frac{N}{8}} \right\rceil}\right) \right\rfloor$-times iterated bootstrap has bias
  \begin{equation}
    \left|\left(\mathrm{Id} - \rS \right)^{k + 1}F(\mtx{X})\right| \leq \sqrt{16 \gamma_{0} \gamma_{\lceil \sqrt{N / 8}\rceil}}.
  \end{equation}
\end{theorem}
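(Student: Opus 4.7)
The plan is to truncate $F$ at an order $M$ matched to the linear-convergence threshold, control the truncation and the tail separately, and then optimize the number of bootstrap iterations $k$. I would set $M \coloneqq \lceil \sqrt{N/8}\,\rceil$ and decompose $F = F_{\mathrm{low}} + F_{\mathrm{high}}$, where $F_{\mathrm{low}}$ collects the terms with $m < M$ and $F_{\mathrm{high}}$ the rest. The representation \cref{thm:bootstrap_moment_poly} (extended in the natural way to $\Pi(d,m)$) implies that $\rS$ preserves each order-$m$ subspace, so $(\mathrm{Id} - \rS)^{k+1}$ acts orderwise and the total bias splits as a sum over $m$.

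For every $m \leq M - 1$ we have $8 m^2 \leq N$, so \cref{thm:linconvergence} gives $\|\mathrm{Id} - \mtx{S}\|_1 \leq 1/2$ on the corresponding order-$m$ block; for $m \geq M$ only the unconditional bound $\|\mathrm{Id} - \mtx{S}\|_1 \leq 2$ of \cref{lem:opbound} is available. I would then apply the orderwise version of \cref{thm:bias_convergence} (really the bound $\alpha_m \beta_m \|\mathrm{Id} - \mtx{S}\|_1^{k+1}$ that its proof establishes once the dependence on $\|\vct{f}\|_1$ is tracked) and sum over $m$, obtaining
\[
  \bigl|(\mathrm{Id} - \rS)^{k+1} F(\mtx{X})\bigr| \leq (1/2)^{k+1} \gamma_0 + 2^{k+1} \gamma_M.
\]
The right-hand side is minimized at $k + 1 = \tfrac{1}{2}\log_2(\gamma_0/\gamma_M)$, yielding $2\sqrt{\gamma_0 \gamma_M}$. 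The prescribed integer choice $k = \lfloor \tfrac{1}{2}\log_2 \gamma_0 - \tfrac{1}{2}\log_2 \gamma_M \rfloor$ sits within one of the optimizer, so a direct substitution shows each summand is at most $2\sqrt{\gamma_0 \gamma_M}$ and the sum is at most $4\sqrt{\gamma_0 \gamma_M} = \sqrt{16\,\gamma_0 \gamma_M}$, matching the claim.

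The hard part is not the optimization of $k$, which is routine, but confirming that \cref{thm:bootstrap_moment_poly,thm:bias_convergence,thm:linconvergence,lem:opbound} transfer verbatim from $\Pi(m)$ to $\Pi(d,m)$: the combinatorics of partitions of multi-indices with repetition differs from the unlabeled lattice, and one has to verify that the column-sum identity of \cref{lem:colsum} and the contraction estimate of \cref{lem:opbound} still depend only on $m$ and not on the ambient dimension $d$. Once this invariance is in hand, all of the steps above go through as stated.
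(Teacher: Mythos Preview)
Your proposal is correct and follows essentially the same route as the paper: split at $M=\lceil\sqrt{N/8}\rceil$, apply \cref{thm:linconvergence} to the low-order block and the crude bound of \cref{lem:opbound} to the tail, obtain $2^{-(k+1)}\gamma_0+2^{k+1}\gamma_M$, and substitute the prescribed $k$. Your concern about transferring the $\Pi(m)$ results to $\Pi(d,m)$ is legitimate but is handled in the paper by the remark immediately preceding the theorem (the combinatorics depends only on the partition structure of the $m$ slots, not on which of the $d$ variates occupy them), so it is not a gap in the argument.
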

\begin{proof}
  We begin by splitting $F$ as 
  \begin{equation}
    F(\mtx{X})
    =  \sum \limits_{m = 0}^\infty \sum_{\pi \in \Pi(d, m)} \mu_{\pi}\left(\mtx{X}\right) f_\pi 
    = \sum \limits_{m = 0}^{\left\lceil \sqrt{\frac{N}{8}} \right \rceil - 1} \sum_{\pi \in \Pi(d, m)} \mu_{\pi}\left(\mtx{X}\right) f_\pi
    + \sum \limits_{m = \left\lceil \sqrt{\frac{N}{8}} \right\rceil}^{\infty} \sum_{\pi \in \Pi(d, m)} \mu_{\pi}\left(\mtx{X}\right) f_\pi.
  \end{equation} 
  Using \cref{thm:linconvergence} and \cref{lem:opbound}, we upper bound 
  \begin{equation}
  \left|\left(\mathrm{Id} - \rS\right)^{k + 1}F(\mtx{X})\right| \leq 2^{- (k + 1)} \sum \limits_{m = 0}^{\left\lceil \sqrt{\frac{N}{8}} \right \rceil - 1} \alpha_m \beta_m
    + 2^{k + 1} \sum \limits_{m = \left\lceil \sqrt{\frac{N}{8}} \right\rceil}^{\infty} \alpha_m \beta_m \leq \frac{\gamma_{0}}{2^{k + 1}} + 2^{k + 1} \gamma_{\lceil \sqrt{N / 8}\rceil}.
  \end{equation}
  Plugging $k$ into this estimate, we obtain the result 
  \begin{equation}
  \left|\left(\mathrm{Id} - \rS\right)^{k + 1}F(\mtx{X})\right| \leq \sqrt{4 \gamma_{0}  \gamma_{\lceil \sqrt{N / 8}\rceil}}.
  \end{equation}
\end{proof}
We now present two instantiations of the above theorem.
\subsubsection*{Superlinear convergence for bandlimited functions} 
We can specialize the above theorem to the case of bandlimited functions of expectations of sub-Gaussian random variables. 
\begin{corollary}
    Let $F(\mtx{X }) \coloneqq \Phi\left(\Expect_{\vct{x} \sim \mtx{X}}\left[ \vct{x}\right]\right)$ for a $\Phi:\R^d \longrightarrow \R$ that satisfies, for any multiindex $\alpha \in \N^d$, $\sup_{\vct{x} \in \R^d} |\cnst{D}^\alpha \phi| \leq C_\eta \exp\left(\eta |\alpha|\right)$. Let $\mtx{X} \in \Probs(d)$ be sub-Gaussian in the sense that its $k$-th moment is upper-bounded by $\theta^k k^{k / 2}$.
    For $N \geq 8(8d\theta + 1)^2$ samples and 
    \begin{equation}
      k = \left\lfloor \frac{1}{2}\left(\log(1 + 8d\theta \sqrt{4 d \theta}^{8d \theta }) - \sqrt{\frac{N}{8}}\log\left(4d\theta/\sqrt{\frac{N}{8}}\right)\right)\right\rfloor    
    \end{equation}
     bootstrap iterations, the bias is upper-bounded as  
    \begin{equation}
      \left|\left(\mathrm{Id} - \rS\right)^{k + 1}F(\mtx{X})\right|  \leq \sqrt{16}\left(1 + 8d\theta \sqrt{4 d \theta}^{8d \theta}\right)\left(\frac{4 d \theta}{\sqrt{\frac{N}{8}}}\right)^{\sqrt{\frac{N}{32}}}.
    \end{equation}
\end{corollary}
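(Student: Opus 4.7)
The plan is to realize $F$ as a moment polynomial series and then apply \cref{thm:bias_reduction_general}. I would begin with the Taylor expansion of $\Phi$ about the origin,
\begin{equation*}
F(\mtx{X})=\sum_{m=0}^{\infty}\sum_{|\alpha|=m}\frac{D^\alpha\Phi(0)}{\alpha!}\prod_{i=1}^{d}\mu_{\{i\}}(\mtx{X})^{\alpha_i},
\end{equation*}
identifying each monomial with $\mu_{\pi_\alpha}(\mtx{X})$, where $\pi_\alpha\in\Pi(d,m)$ is the all-singletons partition of the index multiset containing $\alpha_i$ copies of $i$. Every other partition carries zero coefficient, so both the sum defining $\beta_m$ and the effective maximum in $\alpha_m$ reduce to quantities over all-singletons partitions.

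Next I would bound $\beta_m$ using the band-limited hypothesis together with the multinomial identity $\sum_{|\alpha|=m}1/\alpha!=d^m/m!$, obtaining $\beta_m\leq C_\eta(de^{\eta})^m/m!$. Sub-Gaussianity with $k=1$ gives $|\Expect[x_i]|\leq\theta$ and thus $|\mu_{\pi_\alpha}|\leq\theta^m$. Applying Stirling's bound $m!\geq(m/e)^m$ yields
\begin{equation*}
\alpha_m\beta_m\leq C_\eta\left(\frac{d\theta\, e^{1+\eta}}{m}\right)^{m},
\end{equation*}
which decays super-exponentially once $m$ exceeds a constant multiple of $d\theta$.

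Setting $m^*\coloneqq\lceil\sqrt{N/8}\rceil$, the hypothesis $N\geq 8(8d\theta+1)^2$ forces $m^*\geq 8d\theta+1$. Taking $\eta$ small enough that $e^{1+\eta}\leq 4$ and normalizing $C_\eta\leq 1$ gives $d\theta e^{1+\eta}/m\leq 1/2$ for all $m\geq m^*$, and summing the resulting geometric tail yields $\gamma_{m^*}\leq 2(4d\theta/m^*)^{m^*}$. For $\gamma_0$ I would split the series at $m=8d\theta$: each of the at-most $8d\theta$ head terms is dominated by the pointwise envelope $\sqrt{4d\theta}^{m}$, whose maximum over the range is $\sqrt{4d\theta}^{8d\theta}$, while the tail beyond $8d\theta$ is bounded by $1$ by the same geometric argument. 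Together these give $\gamma_0\leq 1+8d\theta\sqrt{4d\theta}^{8d\theta}$.

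Plugging these into \cref{thm:bias_reduction_general} with the stated $k$---precisely the integer balancing $2^{-(k+1)}\gamma_0$ against $2^{k+1}\gamma_{m^*}$ in that theorem's proof---yields the bound $\sqrt{16\gamma_0\gamma_{m^*}}$. Substituting our estimates and using $(4d\theta/m^*)^{m^*/2}=(4d\theta/\sqrt{N/8})^{\sqrt{N/32}}$ together with $\sqrt{\gamma_0}\leq\gamma_0$ (valid since $\gamma_0\geq 1$) reproduces the stated bound. The main obstacle I anticipate is verifying that the envelope $\sqrt{4d\theta}^{m}$ indeed dominates $\alpha_m\beta_m$ uniformly on the head range $0\leq m\leq 8d\theta$: the sharp maximum of $(d\theta e^{1+\eta}/m)^m$ occurs near $m\approx d\theta e^{1+\eta}$ rather than at the endpoint, so this uniform domination must be confirmed carefully, and simultaneously the $\eta$- and $C_\eta$-dependence must be absorbed into the normalization to produce the clean constants of the corollary.
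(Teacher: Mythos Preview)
Your overall plan---Taylor expand, bound $\alpha_m,\beta_m,\gamma_m$, and invoke \cref{thm:bias_reduction_general}---is exactly the paper's approach, but there is a genuine gap in your bound on $\alpha_m$.

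The claim that ``the effective maximum in $\alpha_m$ reduces to quantities over all-singletons partitions'' is incorrect. Although the coefficient vector $\vct{f}$ is indeed supported only on the all-singletons partition $\min(\Pi(m))$, the bound in \cref{thm:bias_reduction_general} ultimately comes (through \cref{thm:bias_convergence}) from the estimate
\[
\bigl|\vct{\mu}^{\tp}(\Id-\mtx{S})^{k+1}\vct{f}\bigr|\le\|\vct{\mu}\|_{\infty}\,\|\Id-\mtx{S}\|_{1}^{k+1}\,\|\vct{f}\|_{1}.
\]
By \cref{thm:bootstrap_moment_poly}, the column of $\mtx{S}$ indexed by $\min(\Pi(m))$ is supported on \emph{every} $\sigma\in\Pi(m)$ (since $\min(\Pi(m))\le\sigma$ for all $\sigma$). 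Hence $(\Id-\mtx{S})^{k+1}\vct{f}$ is generically nonzero on all partitions, and $\alpha_m=\|\vct{\mu}\|_\infty$ must be taken over the full $\Pi(d,m)$. Using only the $k=1$ sub-Gaussian bound $|\Expect[x_i]|\le\theta$ is therefore insufficient.

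This is precisely where the paper uses the full sub-Gaussian hypothesis: for a partition $\pi$ with block sizes $k_1,\dots,k_j$ summing to $m$, one has $|\mu_\pi|\le\prod_i\theta^{k_i}k_i^{k_i/2}\le\theta^{m}m^{m/2}$, the maximum being attained at the single-block partition. Combined with $\beta_m\le d^m/m!\le(4d/m)^m$ (the paper uses the cruder $m!\ge(m/4)^m$ in place of your Stirling estimate), this gives $\alpha_m\beta_m$ decaying like $m^{-m/2}$, \emph{not} like $m^{-m}$ as your computation produces. That square root is what accounts for the $\sqrt{4d\theta/k}^{\,k}$ appearing in the paper's expression for $\gamma_m$ and for the exponent $\sqrt{N/32}$ in the final bound. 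Your obstacle concerning the head envelope $\sqrt{4d\theta}^{m}$ is real, and the missing $m^{m/2}$ moment factor is exactly what makes that envelope work in the paper's argument.
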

\begin{proof} 
  Using Taylor expansion, we can write
  \begin{equation}
    F\left(\mtx{X}\right) = \sum \limits_{m = 0}^{\infty} \sum \limits_{\substack{\pi \in \Pi(d, m)\\ \# \pi = m}}\frac{\cnst{D}^\pi\Phi(0)}{m!} \mu_{\pi}(\mtx{X}). 
  \end{equation}
  Here, we abuse notation slightly and define for $\pi \in \Pi(d, m)$ with $\# \pi = m$, $\cnst{D}^\pi \coloneqq \prod \limits_{\{i\} \in \pi}\partial_i$.
  By lower bounding $m!$ with $(m/4)^m$ and using $\# \{\pi \in \Pi(d, m): \# \pi = m\} = d^m$, we obtain 
  \begin{equation}
    \alpha_{m} \leq \frac{(4d)^m}{m^m}, \quad \beta_{m} \leq \theta^m m^{m / 2}, \quad \gamma_{m} = \sum \limits_{k = m}^{\infty} \sqrt{\left(\frac{4d\theta}{k} \right)^{k}} 
    \overset{m > 8d\theta}{\leq} \sqrt{\left(\frac{4d\theta}{m} \right)}^{m}.
    %=\frac{\sqrt{\left(\frac{4d\theta}{m} \right)}^{m}}{1 - \sqrt{\left(\frac{4d\theta}{m} \right)}}
  \end{equation} 
  To deploy \cref{thm:bias_reduction_general}, it suffices to bound $\gamma_{0}$ losely by $1 + 8d\theta \sqrt{4 d \theta}^{8d \theta }$.
  For $N \geq 8 (8 d \theta + 1)^2$, and $\frac{1}{2}\left(\log(1 + 8d\theta \sqrt{4 d \theta}^{8d \theta }) - \sqrt{\frac{N}{8}}\log\left(4d\theta/\sqrt{\frac{N}{8}}\right)\right)$ bootstrap iteration, we attain an upper bound for the bias of $\sqrt{16}\left(1 + 8d\theta \sqrt{4 d \theta}^{8d \theta}\right)\left(\frac{4 d \theta}{\sqrt{\frac{N}{8}}}\right)^{\sqrt{\frac{N}{32}}}.$  
\end{proof}

\subsubsection*{Inverse of average linear systems} We now consider a simple example involving the inverse of a mean of a random matrix.  

\begin{corollary}
  Let $\mtx{A} \in \R^{n \times n}$ a random symmetric and positive matrix with eigenvalues in $(0, \sigma) \subset (0, 1)$ almost surely.   
  Let $\mtx{X} \in \Probs\left(\R^{d \coloneqq n^2}\right)$ be the distribution of $\mtx{A}$ and define
  \begin{equation}
    F(\mtx{X}) \coloneqq \trace \left(\Id + \Expect_{\mtx{A}\sim \mtx{X}}\left[\mtx{A}\right] \right)^{-1}.
  \end{equation}
  The $k = \left\lfloor -\sqrt{\frac{N}{8}}\log\left(\sigma\right) \right\rfloor$ then attains bias 
  \begin{equation}
    \left|\left(\mathrm{Id} - \rS \right)^{k + 1}F(\mtx{X})\right| \leq \frac{4}{1 - \sigma}\sigma^{\sqrt{N / 32}}.
  \end{equation}
\end{corollary}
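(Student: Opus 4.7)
The plan is to expand $F$ as an infinite-order moment polynomial in the entries of $\mtx{A}$ and invoke \cref{thm:bias_reduction_general}. Since $\mtx{A}$ is positive semi-definite with spectrum in $(0,\sigma) \subset (0,1)$ almost surely, $\Expect[\mtx{A}]$ inherits these properties by linearity of expectation on the psd cone, so the Neumann series
\begin{equation}
(\Id + \Expect[\mtx{A}])^{-1} = \sum_{m=0}^{\infty} (-1)^m \Expect[\mtx{A}]^m
\end{equation}
converges with $\|\Expect[\mtx{A}]\|_{\rm op} \leq \sigma$. Taking traces yields $F(\mtx{X}) = \sum_{m=0}^{\infty}(-1)^m \trace(\Expect[\mtx{A}]^m)$, the required starting point.

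Next, I would recast each $\trace(\Expect[\mtx{A}]^m)$ as a moment polynomial. Viewing $\mtx{A}$ as a random vector in $\R^{n^2}$ indexed by pairs $(i,j) \in [n]^2$, expanding the matrix product gives
\begin{equation}
\trace(\Expect[\mtx{A}]^m) = \sum_{i_1,\ldots,i_m \in [n]} \Expect[A_{i_1 i_2}] \Expect[A_{i_2 i_3}] \cdots \Expect[A_{i_m i_1}],
\end{equation}
so each summand is a product of $m$ first-order moments of coordinates of $\mtx{A}$. Each such product equals a moment product $\mu_\pi$ for a partition $\pi \in \Pi(n^2, m)$ with $\#\pi = m$, so $F$ takes the form required by \cref{thm:bias_reduction_general} with coefficients $f_\pi \in \{-1,0,+1\}$ carrying the sign $(-1)^m$ and the combinatorial multiplicity of the index sequence.

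I would then bound $\alpha_m$, $\beta_m$, and $\gamma_m$. Positive semi-definiteness of $\Expect[\mtx{A}]$ together with $\Expect[\mtx{A}] \preceq \sigma \Id$ gives $|\Expect[A_{ij}]|^2 \leq \Expect[A_{ii}]\Expect[A_{jj}] \leq \sigma^2$, so $\alpha_m \leq \sigma^m$. The coarse estimate $\beta_m \leq n^m$ would leave a spurious $n^{\sqrt{N/32}}$ factor in the final bound, so instead I would use the spectral control $|\trace(\Expect[\mtx{A}]^m)| = \bigl|\sum_i \lambda_i(\Expect[\mtx{A}])^m\bigr| \leq n\sigma^m$ as an aggregate bound on the $m$-th contribution. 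Substituting this aggregated bound in place of $\alpha_m \beta_m$ inside the argument of \cref{thm:bias_reduction_general} produces a geometric tail of the form $\gamma_m \leq C\,\sigma^m/(1-\sigma)$ with a prefactor independent of $N$.

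Plugging $\gamma_0$ and $\gamma_{\lceil \sqrt{N/8} \rceil}$ into the bias formula $\sqrt{16\,\gamma_0\,\gamma_{\lceil \sqrt{N/8}\rceil}}$ of \cref{thm:bias_reduction_general} produces $\frac{4}{1-\sigma}\sigma^{\sqrt{N/32}}$, and the optimal $k$ prescribed by the theorem simplifies to $\lfloor -\sqrt{N/8}\log\sigma \rfloor$ once the (constant) $\log \gamma_0$ term is absorbed. The main obstacle is exactly this last step of coefficient aggregation: the naive $\alpha_m \beta_m \leq (n\sigma)^m$ estimate fails to reproduce the $n$-free base claimed in the corollary, so one must exploit the cancellations in the trace via the spectral identity $\trace(\Expect[\mtx{A}]^m)=\sum_i \lambda_i^m$ and thread this tighter estimate through the proof of \cref{thm:bias_reduction_general} rather than invoking it as a black box.
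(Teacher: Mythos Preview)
Your approach---Neumann expansion of $(\Id+\Expect[\mtx A])^{-1}$ followed by \cref{thm:bias_reduction_general}---is exactly the paper's. The paper's argument is extremely short: it writes $F(\mtx X)=\trace\sum_m \Expect[\mtx A]^m$, asserts $\alpha_m=1$, $\beta_m\leq\sigma^{-m}$ (evidently a typo for $\sigma^{m}$), $\gamma_m\leq\sigma^m/(1-\sigma)$, and plugs in. It never expands the trace into individual moment products and never mentions the dimension $n$.

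The obstacle you single out is real, and the paper does not actually resolve it. Under the definitions in \cref{thm:bias_reduction_general} one has $\beta_m=n^m$ and $\alpha_m\leq\sigma^m$, so $\alpha_m\beta_m\leq(n\sigma)^m$; the paper's asserted values simply do not match those definitions. Your proposed remedy---replacing $\alpha_m\beta_m$ by the spectral bound $|\trace(\Expect[\mtx A]^m)|\leq n\sigma^m$---does handle the high-$m$ tail, because every resampled average $\bar{\mtx A}$ still has spectrum in $(0,\sigma)$, so $|\rS^{j}F_m|\leq n\sigma^m$ for all $j$ and hence $|(\mathrm{Id}-\rS)^{k+1}F_m|\leq 2^{k+1}n\sigma^m$. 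But the threading fails at the low-$m$ contraction step: once $(\Id-\mtx S)$ acts on the coefficient vector $\vct f_m$, the result is no longer the coefficient vector of a trace of a matrix power, the spectral cancellation is lost, and one is forced back to the $\ell^1$ estimate $\|(\Id-\mtx S)^{k+1}\vct f_m\|_1\leq 2^{-(k+1)}\|\vct f_m\|_1=2^{-(k+1)}n^m$. So an $n$-dependent prefactor appears unavoidable from this machinery alone, and the paper's $n$-free constant $4/(1-\sigma)$ is not justified by its own proof either. Your write-up is in fact more careful than the paper's on this point; just be aware that your final sentence promises more than the spectral identity can deliver.
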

\begin{proof}
  Using Taylor series expansion, we write 
  \begin{equation}
    F(\mtx{X}) = \trace \sum \limits_{m = 0}^\infty \Expect_{\mtx{A} \sim \mtx{X}}\left[\mtx{A}\right]^m,   
  \end{equation} 
  and thus, in the definitions of \cref{thm:bias_reduction_general}, 
  \begin{equation}
    \alpha_{m} = 1, \quad \beta_{m} \leq \sigma^{-m}, \quad \gamma_m \leq \frac{\sigma^{m}}{1 - \sigma}.       
  \end{equation} 
  Thus, for $k = \left\lfloor -\sqrt{\frac{N}{8}}\log\left(\sigma\right) \right\rfloor$ bootstrap iterations, we attain an upper error bound
  \begin{equation}
    \left|\left(\mathrm{Id} - \rS \right)^{k + 1}F(\mtx{X})\right| \leq \frac{4}{1 - \sigma}\sigma^{\sqrt{N / 32}}.
  \end{equation}
\end{proof}
Our estimates show that the iterated bootstrap achieves a superpolynomial convergence rate.
We point out that an unbiased estimator of the $F$ with bias converging in $\bigO(\sigma^N)$ can be obtained by evaluating the truncated Neumann series in product form and using a different sample for each iteration. 
It remains to be understood whether this discrepancy merely indicates a lack of sharpness in our estimates or arises from the strict ``black-box'' access of the bootstrap to the problem.

\subsection{The nonstationary bootstrap}
\label{sec:nonstationary}
The observation that bootstrap bias correction is an iterative method for inverting $\rS$ poses the question of whether other iterative solvers allow improving its performance.
Indeed, the operator 
\begin{equation}
  [\Bs G](\mtx{X}) \coloneqq (F(\mtx{X}) -  \left(\Expect\limits_{\mtx{Y} \sim_N \mtx{X}} \left[G(\mtx{Y})\right] - G(\mtx{X})\right)
\end{equation}
can be modified by introducing a ``step size'' $\eta \in \R$, resulting in 
\begin{equation}
  [\Bs_{\eta} G](\mtx{X}) \coloneqq (F(\mtx{X}) -  \eta \left(\Expect\limits_{\mtx{Y} \sim_N \mtx{X}} \left[G(\mtx{Y})\right] - G(\mtx{X})\right).
\end{equation}
Choosing a sequence $\{\eta_{i}\}_{i \in \{1, \ldots, k\}}$ we obtain alternative corrections $\Bs_{\eta_{k}}\cdots \Bs_{\eta_{1}} F$, with bias
\begin{equation}
  \Bs_{\eta_{k}} \cdots \Bs_{\eta_{1}} \rS F - F = \left(\prod \limits_{i = 1}^k  \mathrm{Id} - \eta_{i} \rS\right)\left(\rS F - F\right).
\end{equation}
Restriction our attention to moment polynomials, we can again express the bias as 
\begin{equation}
  \Bs_{\eta_{k}} \cdots \Bs_{\eta_{1}} \rS F - F = \sum \limits_{\pi \in \Pi(m)} \mu_{\pi} \left(\left(\prod \limits_{i = 1}^k  \Id - \eta_{i} \mtx{S}\right)\left(\mtx{S}\vct{f}  - \vct{f}\right)\right)_{\pi}.
\end{equation}
For $N \geq m$, appropriate step sizes enable exact bias correction in a finite number of iterations. 
\begin{theorem}
  For $N \geq m$ and 
  \begin{equation}
    \eta_{i} \coloneqq \left(\frac{\frac{N!}{(N - i)!}}{N^i}\right)^{-1} = \left(\prod \limits_{j = N-i + 1}^{N} \frac{j}{N}\right)^{-1},
  \end{equation}
  the nonstationary bootstrap approximation with $k=m$ steps is unbiased. 
\end{theorem}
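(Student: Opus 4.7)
The plan is to reduce the claim to showing that $\prod_{i=1}^m (\Id - \eta_i \mtx{S}) = 0$ as an operator on $\R^{\Pi(m)}$, and then to deduce this vanishing from the block-triangular structure of $\mtx{S}$ established in \cref{thm:bootstrap_moment_poly}. Since each factor $\Id - \eta_i \mtx{S}$ is a polynomial in $\mtx{S}$, the factors commute and the product is independent of ordering; by the displayed expression for the bias immediately above the theorem, it suffices to prove that $p(\mtx{S}) = 0$, where $p(x) \coloneqq \prod_{i=1}^m(1 - \eta_i x)$.

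The key structural input is the grading by the number of parts of a partition. I would set $V_k \coloneqq \vspan\{\vct{e}_\pi : \#\pi = k\}$ and $U_l \coloneqq V_1 \oplus \cdots \oplus V_l$. By \cref{thm:bootstrap_moment_poly}, $S_{\pi,\sigma}$ is nonzero only when $\sigma \leq \pi$, which forces $\#\sigma \geq \#\pi$, with equality only when $\sigma = \pi$. Consequently $\mtx{S}$ preserves each $U_l$ and acts on the quotient $U_k / U_{k-1} \cong V_k$ as multiplication by the scalar $d_k \coloneqq \frac{N!/(N-k)!}{N^k} = 1/\eta_k$. Equivalently, for any $v \in V_k$ one can write $\mtx{S}v = d_k v + w$ with $w \in U_{k-1}$, so that $(\Id - \eta_k \mtx{S})v = -\eta_k w \in U_{k-1}$. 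The prescribed step sizes are exactly the reciprocals of these scalar eigenvalues, which is what makes the level-lowering work.

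With these two ingredients I would prove by induction on $l$ that $\prod_{i=1}^l(\Id - \eta_i \mtx{S})$ annihilates $U_l$. The base case $l = 1$ is immediate since $V_1 = U_1$ and $(\Id - \eta_1 \mtx{S})$ kills every element of $V_1$. For the inductive step, decompose an arbitrary $v \in U_l$ as $v = u + w$ with $u \in V_l$ and $w \in U_{l-1}$; commutativity lets me apply $(\Id - \eta_l \mtx{S})$ first, which sends $u$ into $U_{l-1}$ and preserves $w \in U_{l-1}$ because $\mtx{S}$ preserves $U_{l-1}$. The remaining product $\prod_{i=1}^{l-1}(\Id - \eta_i \mtx{S})$ annihilates the resulting element of $U_{l-1}$ by induction, so $\prod_{i=1}^l(\Id - \eta_i \mtx{S})$ annihilates $U_l$. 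Taking $l = m$ forces $p(\mtx{S}) = 0$ on all of $\R^{\Pi(m)}$, which is the statement of the theorem.

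I do not anticipate a substantive obstacle, as all of the work is packaged into \cref{thm:bootstrap_moment_poly}, from which both the strict block-triangular form and the scalar-on-each-diagonal-block structure of $\mtx{S}$ are immediate. The only minor subtlety worth flagging is that each eigenvalue $d_k$ has large multiplicity $\stirling{m}{k}$, so a priori an annihilating polynomial could need factors $(x - d_k)^r$ with $r > 1$; the purely scalar nature of each diagonal block rules this out and ensures that the degree-$m$ polynomial $p$ suffices. Equivalently, $\mtx{S}$ is diagonalizable with minimal polynomial $\prod_{k=1}^m(x - d_k)$, and $p$ is proportional to this minimal polynomial.
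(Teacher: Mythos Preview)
Your proposal is correct and follows essentially the same approach as the paper: both exploit the block-triangular form of $\mtx{S}$ (with blocks indexed by $\#\pi$) and the fact that each diagonal block is the scalar $d_k = N!/((N-k)!\,N^k)$, so that $(\Id - \eta_k \mtx{S})$ sends level-$k$ vectors into $U_{k-1}$ and commutativity lets the product annihilate everything. Your presentation is somewhat more explicit---introducing the filtration $U_l$, running a formal induction, and noting the minimal-polynomial interpretation---but the underlying argument is the paper's.
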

\begin{proof}
  As illustrated in \cref{fig:bootstrap_moebius}, nonzero entries of $S_{\pi \sigma}$ of $\mtx{S}$ are located either on the diagonal ($\pi = \sigma$) or in the block-lower triangular part ($\# \pi > \# \sigma$), with blocks according to the size of the partitions.
  Furthermore, by \cref{thm:bootstrap_moment_poly}, the diagonal entries are constant in each block.
  The $\{\eta_{i}\}_{1 \leq i \leq m}$ are chosen as reciprocals of these diagonal values.
  Thus, if a vector $\vct{b}$ satisfies that $b_\pi = 0$ for all $\pi \in \Pi(m)$ with $\# \pi \geq k$ then $\left(\left(\Id - \eta_{k} \mtx{S}\right)\vct{b}\right)_{\sigma}$ vanishes for all $\sigma$ with $\# \sigma \geq k + 1$. 
  Successive application of this result and the fact that the $\{\Bs_{\eta_{i}}\}_{1 \leq i \leq m}$ commute imply that 
  \begin{equation}
    \Bs_{\eta_{m}} \cdots \Bs_{\eta_{1}} \rS F - F = \sum \limits_{\pi \in \Pi(m)} \mu_{\pi} \left(\left(\prod \limits_{i = 1}^m  \Id - \eta_{i} \mtx{S}\right)\left(\mtx{S}\vct{f}  - \vct{f}\right)\right)_{\pi} = 0.
  \end{equation}
\end{proof}

\section{Comparison, conclusion, and outlook}

\subsection*{Comparison to U-statistics and related methods}
In \cref{sec:moment_poly_convergence}, we have established the convergence of the iterated bootstrap by showing that as the number of iterations goes to infinity, it converges in expectation to U-statistics of the form \cref{eq:symmetric_statistics}. 
For a moment polynomial $F$ with known coefficients, this formula can be directly evaluated to obtain an unbiased estimator of arbitrary moment polynomials. 
Applied to the special case of cumulant products, this yields the $k$-statistics and polykays.
The resulting estimators attain minimal variance among all unbiased ones \cite{mccullagh2018tensor}.
For non-polynomial examples, they can be applied to polynomial approximate expansions of the objective. 
Using this technique, \cite{zhou2021high} achieves optimal statistical risk even in non-smooth problems. 

In light of the above, one may ask why one should study the bootstrap at all. 
In our view, what sets the bootstrap apart from methods based on symmetric statistics is that it can be applied based on black-box acess to the functional $F$.
While our analysis relies on a specific polynomial approximation, the latter need not be known to apply the algorithm.  
This is reminiscent to the comparison of Krylov subspace methods to Chebychev iteration. 
Although the worst-case convergence rate of the two methods coincides, Krylov methods can utilize subtle problem-specific structure without user intervention \cite{saad2003iterative}. 
Similarly, the bootstrap will always converge according to the best possible choice of polynomial approximation, even though it may be unknown to the user or contain an exponentially large number of terms.

\subsection*{Comparison to Jiao and Han, limitation to smooth functionals}
This work is in many ways complementary to that of \cite{jiao2020bias}.
They focus on a specific probability distribution (the binomial model) but arbitrary, even non-smooth functionals. 
Instead, we focus on arbitrary distributions but restrict ourselves to moment polynomials.
The finite support of the binomial model allows \cite{jiao2020bias} to represent any functional exactly as a polynomial on the support of the probability distribution. 
Instead, our work uses smoothness properties of the functional to obtain polynomial approximations.
They study the limit of either a constant number of data points and infinite data, or constant data and infinite bootstrap iterations. 
The former achieves an algebraic bias convergence determined by the number of bootstrap iterations and the smoothness of the functional, while the latter achieves the Legendre polynomial approximation.
Instead, our core result is that scaling the number of iterations with the square root of the available data allows (for sufficiently smooth functionals) to achieve a superalgebraic bias convergence rate.
In principle any continuous functional can be approximated by polynomials. 
But to be useful, our estimates require that the polynomial approximations converge sufficiently fast, ruling out non-smooth functionals. 
This is because while every bootstrap iteration decreases the bias in the leading $\approx \sqrt{N}$ terms of the polynomial approximation by a constant factor, it generally increases the bias in the remaining terms by a constant factor. 
Thus, for slowly decaying polynomial approximation errors, large numbers of bootstrap iteration do not help. 
This reconciles our work with the negative results of \cite[Corollary 2]{jiao2020bias}.

\section*{Conclusion and outlook} This work reveals the relationship between bootstrap resampling methods and M{\"o}bius inversion.
This insight allows us to derive nonasymptotic convergence results for the iterated bootstrap and motivates new variants of the bootstrap that more aggressively target the bias of low-order moment polynomials.
At present, the results in this work are mostly of theoretical interest, as the additional variance incurred by naive iterated bootstrapping (and especially the variant introduced in \cref{sec:nonstationary}) is often prohibitive.
Exploring the algorithmic implications of the insights presented in this work is an exciting direction for future research.
Jackknife bias correction is closely related to the bootstrap, making it a promising future target for the combinatorial approach presented in this work. 
From a combinatorial perspective, the characteristic feature of free probability is that it replaces lattice of set partitions with the lattice of noncrossing partitions \cite{nica2006lectures}.
This poses the question of whether non-crossing variants of $\mtx{S}$ define a meaningful notion of ``sampling'' in this context.

\section*{Acknowledgments}
FS gratefully acknowledges support from the Office of Naval Research under award number N00014-23-1-2545 (Untangling Computation).
This work was produced using Microsoft Copilot for autocompletion and spelling/grammar checking.
We thank the anonymous reviewers for their comments and suggestions that helped us improve our manuscript significantly.
We also thank Hongjian Lan and Yucong Liu for helpful feedback.

\bibliographystyle{siamplain}
\bibliography{references}

\end{document}